\documentclass[amscd,amssymb,11pt]{article}
\usepackage{amsmath,amssymb,amsthm}
\usepackage{mathrsfs}
\usepackage{fullpage}

\usepackage{amsmath}
\usepackage{amsthm}
\usepackage{amsfonts}
\usepackage{amssymb}
\usepackage{bbm}
\usepackage{color}

\newtheorem{theorem}{Theorem}[section]

\newtheorem{remark}[theorem]{Remark}

\newtheorem{lemma}[theorem]{Lemma}
\newtheorem{prop}[theorem]{Proposition}

\newtheorem{definition}[theorem]{Definition}

\numberwithin{theorem}{section}
\numberwithin{equation}{section}

\usepackage[T1]{fontenc}

\title{{\bf Aleksandrov-Bakelman-Pucci maximum principle for $L^n$-viscosity 
solutions of uniformly elliptic equations with unbounded terms: The critical case}}

\author{
    \textsc{Andrzej \'{S}wi\k{e}ch}\\
 \textit{School of Mathematics, Georgia Institute of Technology}\\
    \textit{Atlanta, GA 30332, U.S.A.}\\
    \textit{E-mail: swiech@math.gatech.edu}}
\date{}

\begin{document}

\maketitle

\begin{abstract} We prove the Aleksandrov-Bakelman-Pucci maximum principle for $L^n$-viscosity 
sub/super-solutions of fully nonlinear uniformly elliptic equations in $\Omega\subset\mathbb R^n$ with the drift coefficient $\gamma$ such that $\gamma(\ln(1+\gamma))^{\frac{n-1}{n}}\in L^n(\Omega)$. The approach is based on convex duality and Trudinger-Moser type inequalities.
\end{abstract}

\qquad

\qquad

\noindent
{\bf Keywords:} fully nonlinear uniformly elliptic equations, $L^n$-viscosity solutions,
Aleksandrov-Bakelman-Pucci maximum principle.

\quad

\noindent
{\bf 2020 MSC:} 35D40, 35B50, 35B65, 35D35, 35J15, 35J25, 35J60.

\section{Introduction}\label{sec:intro}
In this paper we prove the Aleksandrov-Bakelman-Pucci (ABP) maximum principle for $L^n$-viscosity 
sub/super-solutions of fully nonlinear uniformly elliptic equations with measurable and unbounded terms in the critical case which was left out in \cite{KS-JMPA}. This amounts to proving the ABP maximum principle for
$L^n$-viscosity subsolutions of the Pucci extremal equations 
\begin{equation}\label{eq:1}
{\mathcal P}^-(D^2u)-\gamma(x)|Du|=f(x)\quad\mbox{in}\,\,\Omega
\end{equation}
where $\Omega$ is a bounded domain in $\mathbb R^n, n\geq 2, f\in L^n(\Omega)$ and $\gamma$ is a nonnegative measurable function on $\Omega$ satisfying an integrability condition which will be described later. We recall that for fixed ellipticity constants $0<\lambda\leq\Lambda$, the Pucci extremal operators ${\mathcal P}^-,{\mathcal P}^+$ are defined for $X\in
\mathbb{S}(n)$, the set of all $n\times n$ symmetric matrices, by
\[
{\mathcal P}^-(X)={\mathcal P}_{\lambda,\Lambda}^-(X):=-\Lambda\mathrm{Tr}(X^+) +\lambda\mathrm{Tr}(X^-), \quad
{\mathcal P}^+(X)={\mathcal P}_{\lambda,\Lambda}^+(X):=\Lambda\mathrm{Tr}(X^-) -\lambda\mathrm{Tr}(X^+),
\]
where $X^+$ and $X^-$ are the positive and negative parts of $X$. We remark that ${\mathcal P}^+(X)=-{\mathcal P}^-(-X)$.

Classical ABP maximum principle is formulated for strong sub/super-solutions of linear equations which do not have to be uniformly elliptic. It originated with works of Aleksandrov, Bakelman and Pucci \cite{A1,A2,Bak,Pucci}. We refer to \cite[Theorem 9.1]{GT} for the full statement. We recall it here in a version directly applicable to fully nonlinear uniformly elliptic equations in Proposition \ref{prop:ABPnLp}. ABP maximum principle allows to estimate the maximum value of a strong subsolution $u\in W^{2,n}_{\rm loc}(\Omega)\cap C(\overline \Omega)$ of \eqref{eq:1} by the maximum value of $u$ over $\partial\Omega$ plus a constant times the $L^n$ norm of $f$ over the so-called upper contact set of $u$. 
Various generalizations of the ABP maximum principle followed from the fundamental work \cite{fs} on improved integrability of Green's functions. These versions replace the $L^n$ norm of $f$ over the upper contact set of $u$ by the $L^p$ norm of $f$ over the whole $\Omega$, however they allow $p$ to be slightly less than $n$. They include results in \cite{Cab} where $\gamma$ is bounded, \cite{FokT,Fok} where $\gamma\in L^{q}(\Omega)$ for $q>n$ and \cite{KR3,JungWoo} where $\gamma\in L^n(\Omega)$.

The first proofs of ABP maximum principle in the original version with the upper contact set for viscosity solutions of uniformly elliptic equations without discontinuous terms appeared in \cite{Caf,CC,T1}. The result was then proved for $L^n$-viscosity 
subsolutions of \eqref{eq:1} when $\gamma$ is bounded, i.e. when gamma is a positive constant, in \cite{CCKS}. Generalizations to $L^p$-viscosity subsolutions when $f\in L^p(\Omega)$ for $p_0<p<n$ for some $p_0<n$ but without the upper contact set appeared first in \cite{CCKS}, with a slightly sharpened version in \cite{cs}, when $\gamma$ is bounded, and then the same result was proved when $\gamma\in L^q_+(\Omega), q>n$ in \cite{FokT,Fok}, see also \cite{KS-MathAnn,KS-JMSJ}. In \cite{KS-JMPA} such generalized ABP maximum principle was established for $L^p$-viscosity subsolutions when $\gamma\in L^n_+(\Omega),f\in L^p(\Omega)$ for $p_1<p<n$ for some $p_1<n$. 

The ABP maximum principle for $L^p$-viscosity subsolutions of \eqref{eq:1} in the original version with the upper contact set when $\gamma$ is unbounded was studied in the recent paper \cite{KS-JMPA}. Two results were obtained there. The first established that the ABP maximum principle holds for $L^n$-viscosity 
subsolutions of \eqref{eq:1} if $\gamma\in L^q_+(\Omega), q>n$ and $f\in L^n(\Omega)$. The second guaranteed that it also holds for $L^p$-viscosity 
subsolutions of \eqref{eq:1} if $\gamma\in L^n_+(\Omega)$ and $f\in L^n(\Omega)$, provided that $p<n$. The critical case $p=n$, which is the $L^n$-viscosity solution equivalent of the classical ABP maximum principle remained open. The reason why this case is difficult is that the theory of $L^p$-viscosity solutions and the proof of ABP maximum principle crucially depend on the ability to solve Pucci extremal equations \eqref{eq:1}. Thus in the critical case $\gamma\in L^n_+(\Omega)$ and $f\in L^n(\Omega)$, we need to be able to find a strong solution $u\in W^{2,n}_{\rm loc}(\Omega)\cap C(\overline \Omega)$ of \eqref{eq:1} with a prescribed value on the boundary. However if $u\in W^{2,n}(\Omega)$ then the term $\gamma|Du|$ may not belong to $L^n(\Omega)$ and the question of existence of strong solutions in $W^{2,n}_{\rm loc}(\Omega)$ remains open. Only existence of strong solutions in $u\in W^{2,n}_{\rm loc}(\Omega)\cap C(\overline \Omega)$ for $p<n$ can be proved. The idea is then that if we can guarantee that $\gamma$ is such that $\gamma|Du|\in L^n(\Omega)$ for $u\in W^{2,n}(\Omega)$ then we should be able to solve the extremal equation \eqref{eq:1} in the class of strong solutions in $W^{2,n}_{\rm loc}(\Omega)$ and this should open up a possibility of proving the ABP maximum principle. Convex duality and Trudinger-Moser inequality suggest that the proper condition for $\gamma$ is $\gamma(\ln(1+\gamma))^{\frac{n-1}{n}}\in L^n(\Omega)$. Indeed we prove in Theorem \ref{prop:ABPLnviscosity} that the ABP maximum principle for $L^n$-viscosity subsolutions of \eqref{eq:1} holds under this condition. The strategy of the proof is similar to the proofs of the corresponding results in \cite{KS-JMPA}. We use solvability of extremal equations to perturb an $L^n$-viscosity subsolution $u$ of \eqref{eq:1} to an $L^n$-viscosity subsolutionn $u_m$ of \eqref{eq:1} with $\gamma, f$ replaced by proper approximating smooth functions $\gamma_m,f_m$ and additional nonlinear gradient terms. We then approximate $u_m$ by sup-convolutions and mollify them to obtain smooth subsolutions of approximating equations with extra gradient terms for which we can adapt the classical proof of the ABP maximum principle. 

In this manuscript we focus entirely on the ABP maximum principle. However ,once the maximum principle is known it should be possible to develop the full theory of $L^n$-viscosity solutions in this case along the lines of how it was done in \cite{KS-JMPA} for the cases considered there.

Finally, we mention that there are many other
ABP maximum principle type results available in the literature for a wide range of cases. They cover for instance equations with superlinearly growing gradient terms \cite{KS-MathAnn, KS-JMSJ, KS-JFPTA, NAK}, 
degenerate/singular equations \cite{ACP,BFO,CDePDiCM, DFQ, Im, Junges}, equations in unbounded domains \cite{ARV, BCV, CD, CDLV, CDV, V, V1,V2}, and many other equations, including integro-PDE.

\section{Notation and Preliminaries}\label{sec:prelim}
Unless stated otherwise, in this manuscript $\Omega$ is a bounded domain in $\mathbb R^n, n\geq 2$.
If $x\in\mathbb{R}^n,r>0$, we denote by $B_r(x)$ the open ball centered at $x$ with radius $r$ and if $x=0$ we will write $B_r$.
For $r\in\mathbb{R}, x\in\mathbb{R}^n, X\in \mathbb{S}(n)$, we write $|r|,|x|$ and $\|X\|$ to denote the standard norms in $\mathbb{R},
\mathbb{R}^n$ and $\mathbb{S}(n)$. If $V$ is a subset of $\mathbb{R}^n$, we write ${\rm diam}(V)$ to denote its diameter and
$|V|$ to denote its Lebesgue measure (if $V$ is measurable). The notation $V\Subset U$ means that $\overline V$ (the closure of $V$)
is a compact subset of $U$. We denote by ${\rm dist}(V,U)$ the distance between $V$ 
and $U$. For a function $f$ we write $f^+$ and $f^{-}$ to denote the positive and negative parts of $f$.

For a function $f\in L^n(U)$ we will denote its norm by $\|f\|_{L^n(U)}$. $L^n_+(U)$ is the set of nonnegative functions in $L^p(U)$. The notation $W^{1,n}(U),W^{1,n}_0(U), W^{2,n}(U)$ is used to denote the standard Sobolev spaces equipped with the norms
$\|u\|_{W^{1,p}(U)}, \|u\|_{W^{2,p}(U)}$. We write $C(U), BUC(U)$ to denote respectively the spaces of continuous and bounded and uniformly continuous functions on $U$. If $U$ is bounded we write 
$\|u\|_{C(\overline U)}$ for the norm of $u$ in $C(\overline U)$. 

If $u\in C(\Omega)$ for an open set $\Omega$, the upper contact set of $u$ is defined as
$$
\Gamma^+(u)=\Gamma^+(u,\Omega):=\{ x\in \Omega \ | \ \exists p\in \mathbb{R}^n\mbox{ such that }u(y)\leq u(x)+\langle p,y-x\rangle \mbox{ for }y\in \Omega\}.
$$

We denote
\[
\phi(t)=\exp\left(t^{\frac{n}{n-1}}\right)-1,\quad t\geq 0,
\]
and let $\psi:[0,+\infty)\to\mathbb R$ be its Legendre transform, that is $\psi(s)=\sup\{st-\phi(t):t\geq 0\}$.
It is well known that $\psi$ behaves asymptotically as $s\to +\infty$ like
\[
s(\ln(1+s))^{\frac{n-1}{n}}.
\]
Therefore, there exists a constant $C>0$ such that for all $t,s\geq 0$,
\begin{equation}\label{eq:dual}
st\leq \exp\left(t^{\frac{n}{n-1}}\right)-1+C\left(t+s(\ln(1+s))^{\frac{n-1}{n}}\right).
\end{equation}
We denote $\Psi(s)=s^n(\ln(1+s))^{n-1}$ and define 
\[
L^\Psi(\Omega):=\{\mbox{measurable}\,f:\Omega\to\mathbb R: \int_{\Omega}\Psi(|f(x)|)dx<+\infty\}.
\]
We denote $L^\Psi_+(\Omega)$ to be the set of nonnegative functions in $L^\Psi(\Omega)$. We notice that $L^n(\Omega)\subset L^\Psi(\Omega)$.

We will need two Trudinger-Moser embedding type results. The first was proved in \cite{TRemb} with the sharp constant obtained in \cite{Moser}.
\begin{theorem}
\label{th:MT}
Let $\Omega$ be a bounded domain. Let $\alpha_n=n\sigma_{n-1}^{\frac{1}{n-1}}$, where $\sigma_{n-1}$ is the measure of the unit sphere in $\mathbb R^n$. There exists a constant $C_n>0$ such that for all $u\in W^{1,n}_0(\Omega)$
\begin{equation}\label{eq:MT}
\int_\Omega\exp\left(\alpha_n \left(\frac{|u|}{\|Du\|_{L^{n}(\Omega)}}\right)^{\frac{n}{n-1}}\right) dx\leq C_n|\Omega|.
\end{equation}
\end{theorem}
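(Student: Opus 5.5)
We prove the Trudinger--Moser inequality \eqref{eq:MT} by Moser's classical route: reduce by symmetrization to a one-dimensional problem, then control the resulting functional by an elementary estimate.

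\textbf{Step 1: reduction to radial functions.} By homogeneity we may assume $\|Du\|_{L^n(\Omega)}=1$. Replacing $u$ by $|u|$ does not change $\|Du\|_{L^n}$, and $C_c^\infty(\Omega)$ is dense in $W^{1,n}_0(\Omega)$; Fatou's lemma then lets us assume $u\geq 0$ is smooth with compact support. Let $u^*$ be the Schwarz symmetrization of $u$ on the ball $\Omega^*$ with $|\Omega^*|=|\Omega|$. Equimeasurability gives
\[
\int_\Omega\exp(\alpha_n u^{\frac{n}{n-1}})\,dx=\int_{\Omega^*}\exp(\alpha_n (u^*)^{\frac{n}{n-1}})\,dx,
\]
and the P\'olya--Szeg\H{o} inequality gives $\|Du^*\|_{L^n}\leq \|Du\|_{L^n}\leq 1$. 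It therefore suffices to treat radial nonincreasing $u$ on a ball $B_R$ with $u(R)=0$.

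\textbf{Step 2: change of variables.} Following Moser, set $|x|^n=R^n e^{-t}$ and $w(t)=n^{\frac{n-1}{n}}\sigma_{n-1}^{\frac1n}u(x)$, with $t\in[0,\infty)$ and $w(0)=0$. A direct computation turns the constraint into $\int_0^\infty |w'(t)|^n\,dt\leq 1$ and the integral into
\[
|B_R|\int_0^\infty e^{w(t)^{\frac{n}{n-1}}-t}\,dt .
\]
The constant $\alpha_n=n\sigma_{n-1}^{\frac1{n-1}}$ is exactly what makes the exponent come out as $w^{\frac{n}{n-1}}$, and $|B_R|=|\Omega|$. So we must show that $\int_0^\infty e^{w^{n/(n-1)}-t}\,dt\leq C_n$ uniformly over absolutely continuous $w$ with $w(0)=0$ and $\int_0^\infty|w'|^n\leq1$.

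\textbf{Step 3: the one-dimensional estimate.} This is the main obstacle, because the naive bound only works for subcritical exponents. H\"older's inequality gives $w(t)\leq t^{\frac{n-1}{n}}\big(\int_0^t|w'|^n\big)^{\frac1n}$, hence $w(t)^{\frac{n}{n-1}}\leq t$, so the integrand is at most $1$ but need not be integrable. Moser's argument refines this. Fix $T>0$ and write $\delta(t)=\int_t^\infty|w'|^n$, so that $\int_0^t |w'|^n\le 1-\delta(t)$.

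First, on the set $\{t: w(t)^{\frac{n}{n-1}}\leq t-A\}$ the integrand is at most $e^{-A}$ times $1$, which needs to be combined with decay to be summable. Second, the set where $w(t)^{\frac{n}{n-1}}> t-A$ has measure bounded by a constant depending only on $A$. To see this, take $t_1<t_2$ both in the set. By H\"older on $[t_1,t_2]$,
\[
w(t_2)\leq w(t_1)+(t_2-t_1)^{\frac{n-1}{n}}\delta(t_1)^{\frac1n}
\quad\text{and}\quad
w(t_1)\leq t_1^{\frac{n-1}{n}}(1-\delta(t_1))^{\frac1n}.
\]
Combining these with the elementary inequality $(a+b)^{\frac{n}{n-1}}\leq \ldots$ (Young/convexity, with weights $1-\delta$ and $\delta$) yields $w(t_2)^{\frac{n}{n-1}}\leq t_2$. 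Near equality forces $t_2-t_1$ to be bounded by $C(A)$. The integral over the "bad" set is then at most $C(A)$, since the integrand is $\leq1$. On its complement we use $e^{w^{n/(n-1)}-t}\leq e^{-(\text{distance})}$ layer by layer, splitting $[0,\infty)$ dyadically in the deficit $t-w^{n/(n-1)}$, and sum the tails to a geometric series.

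Assembling Steps 1--3 gives \eqref{eq:MT} with a constant $C_n$ depending only on $n$.
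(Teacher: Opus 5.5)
The paper gives no proof of this statement; it is quoted from Trudinger and Moser. Your outline follows Moser's original route, and Steps 1 and 2 are correct: your normalization does give $\int_{B_R}|Du^*|^n\,dx=\int_0^\infty|w'|^n\,dt$ and $\alpha_n (u^*)^{\frac{n}{n-1}}=w^{\frac{n}{n-1}}$. Step 3, however, carries the entire difficulty, and it has a genuine gap. Your key claim is that any two points $t_1<t_2$ of $\{t: w(t)^{\frac{n}{n-1}}>t-A\}$ satisfy $t_2-t_1\le C(A)$. This is false. Take $w(t)=T^{-\frac1n}\min(t,T)$: then $w(0)=0$, $\int_0^\infty|w'|^n\,dt=1$ and $w(T)^{\frac{n}{n-1}}=T$. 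So $0$ and $T$ both lie in the set for every $A>0$, with $T$ arbitrary; the set has bounded measure but not bounded diameter. The failure shows in your own inequalities. For $t_1=0$ one has $\delta(t_1)=1$, and they collapse to the trivial bound $w(t_2)^{\frac{n}{n-1}}\le t_2$. In general, for large $t_2-t_1$ the slack they produce is only about $(1-\delta(t_1)^{\frac1{n-1}})(t_2-t_1)$, which vanishes as $\delta(t_1)\to 1$. So \emph{near equality} forces nothing unless $\delta(t_1)$ is kept away from $1$. You also never say how $C(A)$ grows in $A$, yet your final summation over layers of the deficit requires subexponential (in practice linear) growth.

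The missing ingredient is to exploit the membership of $t_1$ itself. Write $q=\frac{n}{n-1}$, $F(t)=t-w(t)^q\ge 0$ and $E_A=\{F\le A\}$ (a closed set), and fix $A>0$. For $t\in E_A$, H\"older on $[0,t]$ gives $t-A\le t(1-\delta(t))^{\frac1{n-1}}\le t\bigl(1-\frac{\delta(t)}{n-1}\bigr)$, that is, $t\,\delta(t)\le (n-1)A$. Now let $t_1$ be the smallest point of $E_A\cap[2(n-1)A,\infty)$, so that $\delta:=\delta(t_1)\le\frac12$. Take $t_2>t_1$ in $E_A$ and assume $\delta>0$ (the case $\delta=0$ is trivial). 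Combine your two inequalities with $(x+y)^q\le(1+\eta)^{q-1}x^q+(1+\eta^{-1})^{q-1}y^q$, choosing $\eta=\frac{2\delta}{1-\delta}$. This gives
\[
t_2-A\le (1+\delta)^{\frac1{n-1}}t_1+\Bigl(\tfrac34\Bigr)^{\frac1{n-1}}(t_2-t_1)\le t_1+A+\Bigl(\tfrac34\Bigr)^{\frac1{n-1}}(t_2-t_1),
\]
hence $t_2-t_1\le c_nA$. Therefore $E_A\subset[0,2(n-1)A)\cup[t_1,t_1+c_nA]$, so $|E_A|\le C_nA$. The layer-cake formula then gives $\int_0^\infty e^{w^q-t}\,dt=\int_0^\infty e^{-A}|E_A|\,dA\le C_n$. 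Without this argument, or an equivalent one, Step 3 does not establish the inequality.
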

The second result is a version of Trudinger-Moser embedding for functions in $W^{1,n}(\Omega)$. The version we state below can be easily deduced from Theorem 1.1 of \cite{LiRuf}.
\begin{theorem}
\label{th:MT1}If $\Omega$ is a bounded domain with smooth boundary then 
\begin{equation}\label{eq:MT1}
\sup_{\|u\|_{W^{1,n}(\Omega)\leq 1}}\int_\Omega\exp\left(\alpha_n |u|^{\frac{n}{n-1}}\right)dx=C(n,\Omega)<+\infty.
\end{equation}
\end{theorem}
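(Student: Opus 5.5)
The plan is to reduce \eqref{eq:MT1} to the whole-space inequality of Li and Ruf. Their Theorem~1.1 says that
\[
\sup_{\|v\|_{W^{1,n}(\mathbb R^n)}\le 1}\int_{\mathbb R^n}\Big(\exp\big(\alpha_n|v|^{\frac{n}{n-1}}\big)-\sum_{k=0}^{n-2}\frac{\alpha_n^k|v|^{\frac{nk}{n-1}}}{k!}\Big)dx<+\infty,
\]
where $\|v\|_{W^{1,n}}^n=\|Dv\|_{L^n}^n+\|v\|_{L^n}^n$. On a bounded $\Omega$ the subtracted Taylor polynomial causes no difficulty. Indeed, for $k\le n-2$ we have $\frac{nk}{n-1}\le n$, so $\int_\Omega|u|^{\frac{nk}{n-1}}\le C(|\Omega|)\big(1+\|u\|_{L^n(\Omega)}^n\big)\le C$ whenever $\|u\|_{W^{1,n}(\Omega)}\le1$. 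Hence it suffices to bound $\int_\Omega\big(\exp(\alpha_n|u|^{\frac{n}{n-1}})-S_{n-2}\big)\,dx$ uniformly over the unit ball of $W^{1,n}(\Omega)$.

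The first step is to get from $u\in W^{1,n}(\Omega)$ to some $v\in W^{1,n}(\mathbb R^n)$ with $v=u$ on $\Omega$. Two constructions suggest themselves.

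\begin{itemize}
\item In the interior we would like simply to take $v=u\eta$ for a cutoff $\eta$.
\item Near $\partial\Omega$ we flatten the boundary by smooth charts and take even reflections, gluing the pieces with a partition of unity. Since $\partial\Omega$ is smooth, the charts can be chosen with Jacobians arbitrarily close to the identity on small enough patches.
\end{itemize}

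The point to verify is the normalization. We would like $\|v\|_{W^{1,n}(\mathbb R^n)}\le 1+\varepsilon$ after rescaling, and would then absorb the factor $(1+\varepsilon)^{\frac{n}{n-1}}$ by Hölder's inequality against the higher integrability coming from a slightly smaller exponent. The cutoff and chart errors only affect lower-order terms, which are controlled by $\|u\|_{L^n}$ and by compactness of $W^{1,n}\hookrightarrow L^n$. Such terms can be handled by the standard Lions-type concentration argument. That argument distinguishes two cases:

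\begin{itemize}
\item either $|Du|^n$ concentrates at a point, in which case only the local gradient mass near that point matters;
\item or it does not, in which case an improved exponent is available and the bound follows from Theorem~\ref{th:MT} applied to localized pieces $u\eta\in W^{1,n}_0$.
\end{itemize}

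I expect the main obstacle to be concentration at a boundary point. There, even reflection doubles the Dirichlet energy, so the reflected function has gradient norm close to $2^{1/n}$ rather than $1$. The naive extension therefore only yields the constant $\alpha_n2^{-\frac1{n-1}}$, and this is in fact the sharp constant in Cianchi's boundary version of the inequality.

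What I would try first is to check whether the norm convention intended in the statement, or a convention in Li--Ruf adapted to domains, excludes this half-ball concentration. If it does not, I would record that the argument above gives \eqref{eq:MT1} with $\alpha_n$ replaced by $\alpha_n 2^{-\frac{1}{n-1}}$ (interior concentration still gives $\alpha_n$). For the later use of \eqref{eq:MT1} with the duality inequality \eqref{eq:dual}, any fixed positive constant in the exponent suffices, at the cost of changing the constants.
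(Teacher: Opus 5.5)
\paragraph{Verdict.} Your conclusion is the right one, and you can drop the hedge about norm conventions. Concentration at a boundary point is not a defect of the reflection method: it is a counterexample to \eqref{eq:MT1} as stated. This holds for every norm with $\|u\|_{W^{1,n}(\Omega)}\le\|Du\|_{L^n(\Omega)}+C\|u\|_{L^n(\Omega)}$, in particular for Li and Ruf's norm $(\|Du\|_{L^n}^n+\|u\|_{L^n}^n)^{1/n}$.

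\paragraph{Counterexample.} Fix $x_0\in\partial\Omega$ and let $w_k(x)=\min\{k^{\frac{n-1}{n}},\,k^{-\frac1n}\ln(1/|x-x_0|)\}^+$. Since $\partial\Omega$ is smooth, $\mathcal H^{n-1}(\partial B_r(x_0)\cap\Omega)=\frac{\sigma_{n-1}}{2}r^{n-1}(1+o(1))$ as $r\to0$, so
\[
\|Dw_k\|_{L^n(\Omega)}^n=\frac1k\int_{e^{-k}}^{1}\frac{\mathcal H^{n-1}(\partial B_r(x_0)\cap\Omega)}{r^{n}}\,dr\longrightarrow\frac{\sigma_{n-1}}{2},\qquad \|w_k\|_{L^n(\Omega)}^n=O(k^{-1}).
\]
Set $u_k:=w_k/\|w_k\|_{W^{1,n}(\Omega)}$. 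Then $\alpha_n|u_k|^{\frac{n}{n-1}}\ge n2^{\frac{1}{n-1}}k(1-o(1))$ on $B_{e^{-k}}(x_0)\cap\Omega$, a set of measure at least $ce^{-nk}$. Hence
\[
\int_\Omega\exp\left(\alpha_n|u_k|^{\frac{n}{n-1}}\right)dx\ge c\,\exp\left(n\left(2^{\frac{1}{n-1}}-1\right)k-o(k)\right)\longrightarrow+\infty .
\]
So the paper's remark that \eqref{eq:MT1} ``can be easily deduced'' from Li--Ruf is incorrect. Their inequality concerns functions in $W^{1,n}_0$ (or on $\mathbb R^n$), and by the display no extensions of the $u_k$ can stay in its unit ball. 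The same computation shows that no exponent larger than $\alpha_n2^{-\frac{1}{n-1}}$ can work; this is Cianchi's constant, as you say.

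\paragraph{What the paper actually needs.} For the rest of the paper this is harmless, and much less than your programme is required. \eqref{eq:MT1} is used only in Theorem \ref{th:strsol} and Proposition \ref{prop:strongsolexpterm}. There it suffices to have \emph{some} $\beta>0$ with $\sup_{\|u\|_{W^{1,n}(\Omega)}\le1}\int_\Omega\exp(\beta|u|^{\frac{n}{n-1}})\,dx<\infty$, after replacing $\alpha_n$ by $\beta$ in the definitions of $a_n$ and $K$.

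This follows in one line from Theorem \ref{th:MT}. Let $E:W^{1,n}(\Omega)\to W^{1,n}_0(B)$ be a bounded extension operator into a ball $B\supset\overline\Omega$, and let $\|E\|$ be its norm when $W^{1,n}_0(B)$ carries $\|Dv\|_{L^n(B)}$. Then $\|D(Eu)\|_{L^n(B)}\le\|E\|$ whenever $\|u\|_{W^{1,n}(\Omega)}\le1$, and \eqref{eq:MT} gives $\int_\Omega\exp\bigl(\alpha_n\|E\|^{-\frac{n}{n-1}}|u|^{\frac{n}{n-1}}\bigr)\,dx\le C_n|B|$. No Li--Ruf theorem, reflection charts or concentration compactness are needed.

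\paragraph{The endpoint.} If you do want $\beta=\alpha_n2^{-\frac{1}{n-1}}$, your sketch does not reach it. Cutoff and chart errors such as $\|u_kD\eta\|_{L^n}$ are linear in $\|u_k\|_{L^n}$, while the constraint leaves only the room $\|u_k\|_{L^n}^n$. So in the concentrating case the localized functions can exceed the critical normalization by $o(1)$, which is fatal at a critical exponent. The localization-plus-Lions argument can be made to give every $\beta<\alpha_n2^{-\frac{1}{n-1}}$. The endpoint needs a genuinely sharp tool, such as Cianchi's symmetrization argument.
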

We refer the reader to the monographs \cite{KR,RaoRen,RGMP} for more on Orlicz spaces, including duality in Orlicz spaces.

Below is the classical ABP maximum principle for strong solutions which we state here for subsolutions and supersolutions of extremal equations. This version follows directly from the standard version \cite[Theorem 9.1]{GT} by linearization (see e.g the proof of Theorem 3.1 of \cite{KS-JMPA}).
\begin{theorem}\label{prop:ABPnLp}
Let $\Omega$ be a bounded domain, $\gamma\in L^n_+(\Omega), f\in L^n(\Omega)$. There exist $C=C(n,\lambda,\|\gamma\|_{L^n(\Omega)})$ such that if
$u\in W^{2,n}_{\rm loc}(\Omega)\cap C(\overline \Omega)$ is a strong subsolution (resp., supersolution) of 
\begin{equation}\label{eq:2n-p}
{\mathcal P}^-(D^2u)-\gamma(x)|Du|=f(x) \,\,\,\mbox{in}\,\,\Omega
\end{equation}
\[
\left(\mbox{resp.,}\,\,
{\mathcal P}^+(D^2u)+\gamma(x)|Du|=-f(x) \,\,\,\mbox{in}\,\,\Omega
\right)
\]
then
\begin{equation}\label{eq:ABPnstatementLp}
\max_{\overline \Omega} u\leq \max_{\partial\Omega} u+C{\rm diam}(\Omega)\|f\|_{L^n(\Gamma^+(u))}
\end{equation}
\[
\left(\mbox{resp.,}\,\,
\min_{\overline \Omega} u\geq \min_{\partial\Omega} u-C{\rm diam}(\Omega)\|f\|_{L^n(\Gamma^+(-u))}\right).
\]
\end{theorem}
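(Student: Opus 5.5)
Since the paper says this statement follows from \cite[Theorem 9.1]{GT} by linearization, I would reduce to the linear theorem. I treat the subsolution case; the supersolution case follows by applying it to $-u$, since ${\mathcal P}^+(X)=-{\mathcal P}^-(-X)$ and $|D(-u)|=|Du|$. Let $u\in W^{2,n}_{\rm loc}(\Omega)\cap C(\overline\Omega)$ satisfy ${\mathcal P}^-(D^2u)-\gamma|Du|\le f$ a.e.

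\textbf{Step 1: linearization.} At a.e.\ point, diagonalize $D^2u(x)$ and define a measurable symmetric matrix $A(x)$ with eigenvalues $\Lambda$ on the positive eigenspace of $D^2u(x)$ and $\lambda$ on the nonpositive one. Then $\lambda I\le A\le\Lambda I$ and
\[
-\mathrm{Tr}(A(x)D^2u(x))={\mathcal P}^-(D^2u(x)).
\]
Measurability is standard: choose the eigenprojections measurably, or use a measurable selection. Next set $b(x)=\gamma(x)Du(x)/|Du(x)|$ when $Du(x)\neq0$ and $b(x)=0$ otherwise. Then $\langle b,Du\rangle=\gamma|Du|$ and $|b|\le\gamma$. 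With these choices $u$ satisfies
\[
Lu:=\mathrm{Tr}(AD^2u)+\langle b,Du\rangle\ \ge\ -f \quad\text{a.e. in }\Omega .
\]
Here $\mathcal D:=(\det A)^{1/n}\ge\lambda$.

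\textbf{Step 2: apply \cite[Theorem 9.1]{GT}.} That theorem is stated for $u\in C^0(\overline\Omega)\cap W^{2,n}_{\rm loc}(\Omega)$ with $Lu\ge g$ and $|b|/\mathcal D\in L^n(\Omega)$. It gives
\[
\sup_\Omega u\le\sup_{\partial\Omega}u+C\,{\rm diam}(\Omega)\,\|g^-/\mathcal D\|_{L^n(\Gamma^+)},
\]
where $C$ depends only on $n$ and $\|b/\mathcal D\|_{L^n(\Omega)}$. Take $g=-f$, so that $g^-=f^+\le|f|$, and use $\mathcal D\ge\lambda$. This gives $\|g^-/\mathcal D\|_{L^n(\Gamma^+)}\le\lambda^{-1}\|f\|_{L^n(\Gamma^+(u))}$ and $\|b/\mathcal D\|_{L^n}\le\lambda^{-1}\|\gamma\|_{L^n(\Omega)}$. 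Since the constant in \cite{GT} is nondecreasing in $\|b/\mathcal D\|_{L^n}$, we obtain a constant $C(n,\lambda,\|\gamma\|_{L^n(\Omega)})$ and \eqref{eq:ABPnstatementLp} follows. The contact set in \cite{GT} is the upper contact set of $u$ over $\Omega$, which matches $\Gamma^+(u)$. In \cite{GT} it is sometimes restricted to points where $u>\sup_{\partial\Omega}u$, but that is a subset of $\Gamma^+(u)$, so the estimate only improves.

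\textbf{Main obstacle.} The delicate point is checking that \cite[Theorem 9.1]{GT} really covers $b\in L^n$ with a constant depending only on $\|b/\mathcal D\|_{L^n}$, rather than on a bounded drift. It does: in the proof in \cite{GT}, the drift term is absorbed through the weighted normal-mapping estimate with weight $g(p)=(|p|^{n/(n-1)}+\mu^{n/(n-1)})^{1-n}$. The GT proof applies the area formula on the contact set directly for $W^{2,n}_{\rm loc}$ functions, so no approximation by smooth functions is needed. The remaining point to check is the measurable choice of $A$ at points where $D^2u$ has repeated eigenvalues. Taking $A=\Lambda P^++\lambda(I-P^+)$, where $P^+$ is the spectral projection onto the positive eigenspace, removes any ambiguity, and $P^+$ is a Borel function of $D^2u$.
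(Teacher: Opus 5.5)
Your proposal is correct and follows the route the paper indicates. You linearize via $A=\Lambda P^{+}+\lambda(I-P^{+})$ and $b=\gamma Du/|Du|$, so that $\mathrm{Tr}(AD^2u)+\langle b,Du\rangle\ge -f$ with $\mathcal D\ge\lambda$ and $|b|/\mathcal D\le\gamma/\lambda$. You then invoke \cite[Theorem 9.1]{GT}, whose bound is ${\rm diam}(\Omega)$ times a constant nondecreasing in $\|b/\mathcal D\|_{L^n}$, and treat supersolutions through $-u$. Your side remark about whether Gilbarg--Trudinger need smooth approximation is immaterial, since you only use their theorem as stated for $W^{2,n}_{\rm loc}(\Omega)\cap C(\overline\Omega)$.
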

\begin{remark}\label{rem:ABPnLp}
We remark that the constant $C$ in \eqref{eq:ABPnstatementLp} is such that $C(n,\cdot,\cdot)$ is continuous on $(0,\infty)\times[0,\infty)$ and for every fixed $n,p$, $C(n,\lambda,p)\to+\infty$ as $\lambda\to 0$.
\end{remark}

We recall the definition of an $L^p$-viscosity solution of from \cite{CCKS}.

\begin{definition}
\label{defvisc}
A function $u\in C(\Omega)$ is an $L^p$-viscosity subsolution (resp., supersolution) of $F(x,u,Du,D^2u)=f(x)$ in $\Omega$ if
$$
{\rm ess}\liminf_{y\to x}\left(F(y,u(y),D\varphi (y),D^2\varphi (y))- f(y)\right) \leq 0
$$
$$
\left(\mbox{resp., }{\rm ess}\limsup_{y\to x}\left(F(y,u(y),D\varphi (y),D^2\varphi (y))- f(y)\right)
\geq 0\right)
$$
whenever for $\varphi\in W^{2,p}_{\rm loc}(\Omega)$, $u-\varphi$ attains a local
maximum $($resp., minimum$)$ at $x\in \Omega$.
Finally, $u\in C(\Omega)$ is called an $L^p$-viscosity solution 
if it is both an $L^p$-viscosity subsolution and an $L^p$-viscosity supersolution.
\end{definition}

If in the above definition we assume that $F,f$ are continuous and require that $\varphi\in C^2(\Omega)$, the function $u$ is called
a $C$-viscosity subsolution (resp., supersolution). The ${\rm ess}\liminf$ and ${\rm ess}\limsup$ can then be removed and Definition \ref{defvisc} becomes the standard pointwise definition of a viscosity solution, see \cite{CIL}.

\section{Existence of solutions to Pucci extremal equations}\label{sec:Pucci}

\begin{lemma}\label{lem:strsolestimates}
Let $\Omega$ be a bounded domain, $f\in L^n(\Omega),\gamma\in L^\Psi_+(\Omega)$ and $g\in C(\partial\Omega)$. Let $u\in W^{2,n}_{\rm loc}(\Omega)\cap C(\overline \Omega)$ be a strong solution of
\begin{equation}\label{eq:Puccieq1}
\begin{cases}
{\mathcal P}^{\pm}(D^2u)\pm\gamma(x)|Du|=f(x)\quad \mbox{in }\Omega,\\
u=g\quad\mbox{on }\partial \Omega.
\end{cases}
\end{equation}
Then for every
$\Omega' \Subset\Omega$, 
\begin{equation}\label{lem:est1}
\|u\|_{W^{2,n}(\Omega')}
\le
C(n,\lambda,\Lambda,\gamma,\|\gamma\|_{L^n(\Omega)},
{\rm dist}(\Omega',\partial\Omega), {\rm diam}(\Omega))
\left(
\|g\|_{C(\partial\Omega)}
+
\|f\|_{L^n(\Omega)}
\right).
\end{equation}
The dependence of $C$ in \eqref{lem:est1} on $\gamma$ is determined by how fast integrals of $\gamma^n(\ln(1+\gamma))^{n-1}$ over balls of radius $R$ converge to $0$ as $R\to 0$.
\end{lemma}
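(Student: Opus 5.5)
First, Theorem \ref{prop:ABPnLp} gives a sup-norm bound. Since $L^\Psi_+(\Omega)\subset L^n_+(\Omega)$, it applies to $u$ as a subsolution and as a supersolution (the inequalities ${\mathcal P}^-\le{\mathcal P}^+$ convert the equation with either choice of signs into both one-sided inequalities). This yields
\[
\|u\|_{C(\overline\Omega)}\le \|g\|_{C(\partial\Omega)}+C(n,\lambda,\|\gamma\|_{L^n(\Omega)}){\rm diam}(\Omega)\|f\|_{L^n(\Omega)}.
\]
It remains to bound the second derivatives locally, using the sup norm as the lower order quantity.

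For the interior estimate I will work on small balls. Fix $x_0\in\Omega'$, let $r\le \frac12{\rm dist}(\Omega',\partial\Omega)$ be chosen later, and let $\eta$ be a cutoff with $\eta=1$ on $B_{r/2}(x_0)$, supported in $B_r(x_0)$, and satisfying $|D\eta|\le C/r$, $|D^2\eta|\le C/r^2$. The equation gives a measurable uniformly elliptic linear equation $a_{ij}u_{ij}=f\mp\gamma|Du|$ a.e., with $\lambda I\le a\le\Lambda I$. Directly using Calderón--Zygmund $W^{2,n}$ estimates is not possible since $a_{ij}$ is only measurable. I will therefore use the $W^{2,n}$ estimates for the convex (concave) Pucci operator from Caffarelli / \cite{CCKS}, which hold for strong solutions of ${\mathcal P}^\pm(D^2w)=h$ with $h\in L^n$ (Evans--Krylov / Caffarelli $W^{2,p}$ theory, $p=n$ is allowed since ${\mathcal P}^\pm$ are convex/concave). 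Applying these to $w=\eta u$ with $h$ containing $\gamma\eta|Du|$, $f\eta$, and the commutator terms $u D^2\eta$, $Du\otimes D\eta$ gives
\[
\|D^2u\|_{L^n(B_{r/2})}\le C\Big(\|\gamma\eta Du\|_{L^n(B_r)}+\|f\|_{L^n}+r^{-1}\|Du\|_{L^n(B_r)}+r^{-2}\|u\|_{L^\infty}\Big).
\]

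The key step, which I expect to be the main obstacle, is bounding $\|\gamma\eta Du\|_{L^n}$ by a small multiple of $\|D(\eta Du)\|_{L^n}$ plus lower order terms. I will use \eqref{eq:dual} with $s=\gamma^n/\epsilon$ and $t$ tied to $|v|^{n/(n-1)}$, where $v=\eta Du/\|D(\eta Du)\|_{L^n}\in W^{1,n}_0(B_r)$, after scaling. The integral $\int\gamma^n|v|^n$ is then controlled by $\int \exp(\alpha|v|^{n/(n-1)})$, which Theorem \ref{th:MT} bounds by $C_n|B_r|$, plus $C\int_{B_r}\Psi$-type terms. Concretely, the Orlicz–Hölder duality between $\exp(t^{n/(n-1)})$ and $s(\ln(1+s))^{(n-1)/n}$ applied to $\gamma^n$ and $|v|^n$ gives
\[
\|\gamma\eta Du\|_{L^n}^n\le C\,\|\gamma^n\|_{L\log^{\frac{n-1}{n}}L(B_r)}\|D(\eta Du)\|^n_{L^n},
\]
and the Luxemburg norm of $\gamma^n(\ln(1+\gamma^n))$ over $B_r$ tends to $0$ as $r\to0$. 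The difficulty is whether the correct Orlicz class is $\gamma^n(\ln(1+\gamma))^{n-1}$: the exponent $n-1$ matches since the dual of $\exp(t^{n/(n-1)})$ applied to $\gamma^n$ gives $\gamma^n(\ln\gamma)^{(n-1)/n}$. This needs care, and it is exactly where the dependence of $C$ on the modulus of $\int_{B_R}\Psi(\gamma)$ enters.

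Finally, I choose $r$ so that this coefficient is at most $1/(2C)$ and absorb the term. The intermediate $\|Du\|_{L^n}$ terms are handled by interpolation, $\|Du\|_{L^n}\le\delta\|D^2u\|_{L^n}+C_\delta\|u\|_\infty$, applied on a family of nested balls with the standard Simon-type absorption lemma for weighted seminorms $\sup_r (d-r)^2\|D^2u\|_{L^n(B_r)}$. Since a priori $u\in W^{2,n}_{\rm loc}$ only, these quantities are finite. Covering $\Omega'$ by finitely many balls, with the number depending on ${\rm dist}(\Omega',\partial\Omega)$, ${\rm diam}(\Omega)$ and $r$, and inserting the sup bound from the first step yields \eqref{lem:est1}.
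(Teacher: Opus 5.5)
You have the paper's overall scheme, and those parts are fine: the sup bound from Theorem \ref{prop:ABPnLp}, the cutoff, the $W^{2,n}$ estimate for ${\mathcal P}^{\pm}$ applied to $\eta u$, Trudinger--Moser for a $W^{1,n}_0$ first-derivative quantity, smallness on small balls, absorption via nested-ball seminorms and interpolation, and covering.

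The gap is in the one step where $\gamma\in L^\Psi$ is actually used, and as written that step is wrong. You pair $\gamma^n$ with $|v|^n$ through the Young pair of \eqref{eq:dual}, i.e.\ $\exp(t^{\frac{n}{n-1}})$ against $s(\ln(1+s))^{\frac{n-1}{n}}$. With $s=\gamma^n/\epsilon$ and $st=\gamma^n|v|^n$, you are forced to take $t=\epsilon|v|^n$. The exponential term is then $\exp(\epsilon^{\frac{n}{n-1}}|v|^{\frac{n^2}{n-1}})$, which Theorem \ref{th:MT} does not control.

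In fact your displayed inequality with $\|\gamma^n\|_{L\log^{\frac{n-1}{n}}L}$ is false for general $v\in W^{1,n}_0$. Near $0$ take $v=(\ln\frac{1}{|x|})^{a}$ and $\gamma^n=|x|^{-n}(\ln\frac{1}{|x|})^{-b}$, with $\frac{n-1}{n^2}<a<\frac{n-1}{n}$ and $\frac{2n-1}{n}<b\le na+1$. Then $v\in W^{1,n}$ and $\gamma^n(\ln(1+\gamma^n))^{\frac{n-1}{n}}\in L^1$, but $\int\gamma^n|v|^n=\infty$. Your subsequent remarks name yet other classes: $\gamma^n\ln(1+\gamma^n)$, and the self-contradictory ``the exponent $n-1$ matches since \dots\ gives $\gamma^n(\ln\gamma)^{(n-1)/n}$''. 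You leave the point explicitly unresolved, so the absorption estimate, which is the heart of the lemma, is not established.

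The repair, which is what the paper does, is to apply \eqref{eq:dual} to first powers and only then take $n$-th powers. Set $V=\eta Du$ (the paper uses $D(\eta u)$; either works), $N=\|DV\|_{L^n(B_r)}$, $s=\gamma/\varepsilon$ and $t=\varepsilon|V_i|/N$. Using $(e^a-1)^n\le e^{na}-1$, you get
\[
(\gamma|V_i|)^n\le CN^n\Big(\exp\Big(n\varepsilon^{\frac{n}{n-1}}\Big(\frac{|V_i|}{N}\Big)^{\frac{n}{n-1}}\Big)+\varepsilon^n\frac{|V_i|^n}{N^n}+\frac{\gamma^n}{\varepsilon^n}\Big(\ln\Big(1+\frac{\gamma}{\varepsilon}\Big)\Big)^{n-1}\Big).
\]
The $n$-th power of $\psi(s)\sim s(\ln(1+s))^{\frac{n-1}{n}}$ is exactly $\Psi(s)$; this is where the hypothesis enters. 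The three terms are handled as follows:
\begin{itemize}
\item For $n\varepsilon^{\frac{n}{n-1}}\le\alpha_n$, Theorem \ref{th:MT} bounds the integral of the first term over $B_r$ by $C_n|B_r|$.
\item Poincar\'e bounds the second by $C\varepsilon^n r^n$.
\item The third tends to $0$ as $r\to0$. Its rate is controlled by $\int_{B_r}\Psi(\gamma)$ and $|B_r|$, because $\ln(1+\gamma/\varepsilon)\le\ln\frac{1}{\varepsilon}+\ln(1+\gamma)$ for $\varepsilon\le1$.
\end{itemize}
Hence $\|\gamma V\|_{L^n(B_r)}\le\theta\|DV\|_{L^n(B_r)}$ with $\theta$ as small as you like for $r$ small. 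This also yields the stated dependence of $C$ on $\gamma$.

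Equivalently, at the level of $n$-th powers the correct Young pair is $\exp(t^{\frac{1}{n-1}})$ (modified near $0$) against $s(\ln(1+s))^{n-1}$. That is, the right class is $\gamma^n\in L\log^{n-1}L$, which is the same as $\gamma\in L^\Psi$, not $L\log^{\frac{n-1}{n}}L$. With this step replaced, the rest of your argument goes through.
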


\begin{proof}

We will only do the proof for ${\mathcal P}^-(D^2u)-\gamma(x)|Du|=f(x)$.
Let $\Omega' \Subset\Omega$. Since we can cover $\Omega'$ by a finite number of balls having fixed radius
$R$, it is enough to show \eqref{lem:est1} for $B_R$ instead of $\Omega'$.
We will denote the measure of $B_R$ by $|B_R|=\omega_n R^n$, where $\omega_n$
is the measure of $B_1$. Let $\rho \in (0,1)$ and $\eta \in C_0^2(B_R)$
be such that $0 \le \eta \le 1$, $\eta=1$ in $B_{\rho R}$,
$\eta=0$ for $|x|\ge \tilde{\rho}R$, where $\tilde{\rho}=(1+\rho)/2$,
and
\[
|D\eta|
\le
\frac{c}{(1-\rho)R},
\qquad
\|D^2\eta\|
\le
\frac{c}{(1-\rho)^2R^2}
\]
for some absolutr constant $c$.
We set $v=\eta u$. Then $v\in W^{2,n}_0(B_R)$ and ${\rm supp}(v)\subset B_{\tilde{\rho}R}$. Therefore, using standard elliptic estimates \cite{Caf,CC,e} , we have
\[
\|v\|_{W^{2,n}(B_{\tilde{\rho}R})}
\le
C_1 \|P^{-}(D^2v)\|_{L^n(B_{\tilde{\rho}R})},
\]
which implies by Theorem \ref{th:MT} that for every $i=1,...,n$,
\begin{equation}\label{eq:MTa}
\int_{B_{\tilde{\rho}R}}\exp\left(\alpha_n \left(\frac{|v_{x_i}|}{\|D^2v\|_{L^{n}(B_{\tilde{\rho}R})}}\right)^{\frac{n}{n-1}}\right) dx\leq \bar C_nR^n
\end{equation}
for some constant $\bar C_n$.

Hence we have
\begin{align}\label{eq:a1}
\|D^2u\|_{L^n(B_{\rho R})}
&\le
\|D^2v\|_{L^n(B_{\tilde{\rho}R})}
\nonumber\\
&\le
C_1 
\|P^{-}(D^2v)\|_{L^n(B_{\tilde{\rho}R})}
\nonumber\\
&=
C_1 
\|P^{-}(\eta D^2u + 2D\eta \otimes Du + u D^2\eta)\|_{L^n(B_{\tilde{\rho}R})}
\\
&\le
C_2
\left(
\|\eta P^{-}(D^2u)\|_{L^n(B_{\tilde{\rho}R})}
+
\frac{1}{(1-\rho)R}\|Du\|_{L^n(B_{\tilde{\rho}R})}
+
\frac{1}{(1-\rho)^2R^2}\|u\|_{L^n(B_{\tilde{\rho}R})}
\right).\nonumber
\end{align}

It follows from the equation and the maximum principle that
\begin{align}\label{eq:a2}
C_2 \|\eta P^{-}(D^2u)\|_{L^n(B_{\tilde{\rho}R})}
&\le
C_2 \|f\|_{L^n(B_{\tilde{\rho}R})}
+
C_2 \|\eta \gamma Du\|_{L^n(B_{\tilde{\rho}R})}
\nonumber\\
&\le
C_2 \|f\|_{L^n(B_{\tilde{\rho}R})}
+
C_2 \|\gamma D(\eta u)\|_{L^n(B_{\tilde{\rho}R})}
\\
&\quad +
C_3 \|\gamma\|_{L^n(B_{\tilde{\rho}R})}
\frac{
\|g\|_{C(\partial\Omega)}
+
\|f\|_{L^n(\Omega)}
}{
(1-\rho)R
}.\nonumber
\end{align}

Using \eqref{eq:dual} we now have for $\varepsilon>0$
\begin{align}
 \gamma |v_{x_i}|&=\|D^2v\|_{L^{n}(B_{\tilde{\rho}R})} \frac{\gamma}{\varepsilon}\frac{\varepsilon|v_{x_i}|}{\|D^2v\|_{L^{n}(B_{\tilde{\rho}R})}}\leq
\|D^2v\|_{L^{n}(B_{\tilde{\rho}R})}
\left(\exp\left(\left(\frac{\varepsilon|v_{x_i}|}{\|D^2v\|_{L^{n}(B_{\tilde{\rho}R})}}\right)^{\frac{n}{n-1}}\right)-1\right.
\nonumber\\
&+\left.C\left(\frac{\varepsilon|v_{x_i}|}{\|D^2v\|_{L^{n}(B_{\tilde{\rho}R})}}+\frac{\gamma}{\varepsilon}\left(\ln\left(1+\frac{\gamma}{\varepsilon}\right)\right)^{\frac{n-1}{n}}\right)\right).
\nonumber
\end{align}
Therefore,
\begin{align}
 (\gamma |v_{x_i}|)^n
 &\leq
C_4\|D^2v\|_{L^{n}(B_{\tilde{\rho}R})}^n
\left(\exp\left(\left(\frac{n\varepsilon|v_{x_i}|}{\|D^2v\|_{L^{n}(B_{\tilde{\rho}R})}}\right)^{\frac{n}{n-1}}\right)\right.
\nonumber\\
&+\left.\frac{\varepsilon^n|v_{x_i}|^n}{\|D^2v\|_{L^{n}(B_{\tilde{\rho}R})}^n}+\frac{\gamma^n}{\varepsilon^n}\left(\ln\left(1+\frac{\gamma}{\varepsilon}\right)\right)^{n-1}\right).
\nonumber
\end{align}
Choosing $\varepsilon$ such that $n\varepsilon^{\frac{n}{n-1}}\leq\alpha_n$ and integrating the above inequality we thus have
\begin{align}
\int_{B_{\tilde{\rho}R}} (\gamma |v_{x_i}|)^n dx&\leq C_5\|D^2v\|_{L^{n}(B_{\tilde{\rho}R})}^n\Bigg(\bar C_n\omega_nR^n+\varepsilon^n
 \nonumber
 \\
 &
+\int_{B_{\tilde{\rho}R}}\frac{\gamma^n}{\varepsilon^n}\left(\ln\left(1+\frac{\gamma}{\varepsilon}\right)\right)^{n-1}dx\Bigg)=:C(\varepsilon,R)\|D^2v\|_{L^{n}(B_{\tilde{\rho}R})}^n.
\nonumber
\end{align}
We now choose first $\varepsilon$ sufficiently small and then take $R=R(\varepsilon)$ sufficiently small so that $C(\varepsilon,R(\varepsilon))\leq (2C_2n)^{-n}$. We remark that the dependence of $R(\varepsilon)$ on $\gamma$ is is determined by how fast integrals of $\gamma^n(\ln(1+\gamma))^{n-1}$ over balls of radius $R$ converge to $0$. We now have
\begin{equation}\label{eq:a3}
\|\gamma Dv\|_{L^n(B_{\tilde{\rho}R})}\leq \sum_{i=1}^n \|\gamma v_{x_i}\|_{L^n(B_{\tilde{\rho}R})}\leq \frac{1}{2C_2}\|D^2v\|_{L^{n}(B_{\tilde{\rho}R})}\leq
\frac{C_1}{2C_2}\|P^{-}(D^2(v))\|_{L^n(B_{\tilde{\rho}R})}.
\end{equation}

It follows from \eqref{eq:a1}, \eqref{eq:a2} and \eqref{eq:a3} that for some constant $C_6$
\begin{align*}
&\frac{C_1}{2}
\|P^{-}(D^2(\eta u))\|_{L^n(B_{\tilde{\rho}R})}
\le
C_2 \|f\|_{L^n(\Omega)}
\\
&\qquad\quad+
\frac{C_6}{(1-\rho)^2R^2}
\left(
(1-\rho)R
\bigl(
\|g\|_{C(\partial\Omega)}
+
\|f\|_{L^n(\Omega)}
+
\|Du\|_{L^n(B_{\tilde{\rho}R})}
\bigr)
+
\|u\|_{L^n(B_{\tilde{\rho}R})}
\right).
\end{align*}

Hence, again from \eqref{eq:a1}, we obtain
\begin{equation}\label{eq:a4}
(1-\rho)^2R^2
\|D^2u\|_{L^n(B_{\rho R})}
\le
C_7
\left(
\|g\|_{C(\partial\Omega)}
+
\|f\|_{L^n(\Omega)}
\right)
+
C_8
\left(
(1-\tilde{\rho})R
\|Du\|_{L^n(B_{\tilde{\rho}R})}
+
\|u\|_{L^n(B_{\tilde{\rho}R})}
\right).
\end{equation}

If we introduce norms
\[
\Psi_k(u)
=
\sup_{0<\rho<1}
(1-\rho)^k R^k
\|D^k u\|_{L^n(B_{\rho R})},
\qquad
k=0,1,2,
\]
then \eqref{eq:a4} gives the inequality
\[
\Psi_2(u)
\le
C_7
\left(
\|g\|_{C(\partial\Omega)}
+
\|f\|_{L^n(\Omega)}
\right)
+
C_8\bigl(\Psi_1(u)+\Psi_0(u)\bigr).
\]

The required estimate follows from the interpolation inequality
\[
\Psi_1 \le \delta\,\Psi_2 + \frac{C}{\delta}\,\Psi_0,
\]
for $\delta>0$, which may be found in \cite{GT}.
\end{proof}

\begin{theorem}\label{th:strsol}
Let $\Omega$ be a bounded domain which satisfies a uniform exterior cone condition. Let $f\in L^n(\Omega),\gamma\in L^\Psi_+(\Omega), {\rm supp}(\gamma)\Subset\Omega$ and $g\in C(\partial\Omega)$. Then there exists a unique strong solution $u\in W^{2,n}_{\rm loc}(\Omega)\cap C(\overline \Omega)$ of
\begin{equation}\label{eq:Puccistrong}
\begin{cases}
{\mathcal P}^{\pm}(D^2u)\pm\gamma(x)|Du|=f(x)\quad \mbox{in }\Omega,\\
u=g\quad\mbox{on }\partial \Omega.
\end{cases}
\end{equation}
\end{theorem}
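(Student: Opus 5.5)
We plan to prove existence by approximating $\gamma$ with bounded functions, using the existing theory for bounded drift, and passing to the limit with the uniform interior estimates of Lemma \ref{lem:strsolestimates}. Uniqueness will come from the ABP maximum principle, Theorem \ref{prop:ABPnLp}.

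\emph{Approximation.} Let $\gamma_m=\min(\gamma,m)$. Then ${\rm supp}(\gamma_m)\subset{\rm supp}(\gamma)\Subset\Omega$, $0\le\gamma_m\le\gamma$, and $\gamma_m\to\gamma$ a.e. and in $L^\Psi$. Since $\gamma_m\in L^\infty$ and $\Omega$ satisfies a uniform exterior cone condition, the known existence theory for bounded drift applies (see \cite{CCKS}, \cite{KS-JMPA}, or \cite{FokT,Fok}). It gives a strong solution $u_m\in W^{2,n}_{\rm loc}(\Omega)\cap C(\overline\Omega)$ of \eqref{eq:Puccistrong} with $\gamma_m$ in place of $\gamma$.

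\emph{Uniform bounds.} Theorem \ref{prop:ABPnLp} applied to $u_m$ and $-u_m$ (the operators are related by ${\mathcal P}^+(X)=-{\mathcal P}^-(-X)$) gives
\[
\|u_m\|_{C(\overline\Omega)}\le \|g\|_{C(\partial\Omega)}+C\,{\rm diam}(\Omega)\|f\|_{L^n(\Omega)}.
\]
Here $C$ depends only on an upper bound for $\|\gamma_m\|_{L^n}\le\|\gamma\|_{L^n}$.

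Next we apply Lemma \ref{lem:strsolestimates} to $u_m$. The point to check is that its constant is uniform in $m$. The constant depends on $\gamma$ only through $\|\gamma\|_{L^n}$ and through the rate at which $\sup_x\int_{B_R(x)}\gamma^n(\ln(1+\gamma/\varepsilon))^{n-1}dx\to0$ as $R\to0$. Both quantities are dominated by the corresponding ones for $\gamma$, because $\gamma_m\le\gamma$ and $\Psi$ is monotone. Hence
\[
\|u_m\|_{W^{2,n}(\Omega')}\le C(\Omega')\quad\text{for every }\Omega'\Subset\Omega,\ \text{uniformly in } m.
\]

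\emph{Passing to the limit in the interior.} By weak compactness in $W^{2,n}(\Omega')$ and a diagonal argument, a subsequence satisfies $u_m\rightharpoonup u$ weakly in $W^{2,n}_{\rm loc}$. On compact subsets we also get $u_m\to u$ locally uniformly and $Du_m\to Du$ strongly in $L^n_{\rm loc}$. To identify the limit equation we use the $L^n$-viscosity stability theory \cite{CCKS}. Write the equation as ${\mathcal P}^-(D^2u_m)-\gamma|Du_m|=f-(\gamma-\gamma_m)|Du_m|$, so the error term is $h_m=(\gamma-\gamma_m)|Du_m|$.

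We need $h_m\to0$ in $L^n_{\rm loc}$. We will prove this with the Young inequality \eqref{eq:dual} combined with Trudinger--Moser (Theorem \ref{th:MT1}, or Theorem \ref{th:MT} after cutting off). The ingredients are the uniform $W^{2,n}$ bound, which makes $Du_m$ uniformly bounded in $W^{1,n}(\Omega')$, and the fact that $\int\Psi(\gamma-\gamma_m)\to0$. This is the same computation as in the proof of Lemma \ref{lem:strsolestimates}.

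Each $u_m$ is then an $L^n$-viscosity solution of an equation whose data converge, so stability shows that $u$ is an $L^n$-viscosity solution of ${\mathcal P}^-(D^2u)-\gamma|Du|=f$. Since $u\in W^{2,n}_{\rm loc}$ and $\gamma|Du|\in L^n_{\rm loc}$ (again by Lemma \ref{lem:strsolestimates}-type estimates), $u$ is a strong solution.

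\emph{Boundary behaviour.} This is the step we expect to be the main obstacle: showing $u\in C(\overline\Omega)$ with $u=g$. The hypothesis ${\rm supp}(\gamma)\Subset\Omega$ is what makes it work. Near $\partial\Omega$, in the region $U=\Omega\setminus\overline{V}$ with ${\rm supp}(\gamma)\subset V\Subset\Omega$, every $u_m$ solves ${\mathcal P}^\pm(D^2u_m)=f$. The uniform exterior cone condition gives barriers for this drift-free equation, as in \cite{CCKS}. We build them with the uniform sup bound on $\partial V$, and they yield a modulus of continuity at $\partial\Omega$ that is uniform in $m$. Interior equicontinuity follows from the $W^{2,n}$ bounds. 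Arzel\`a--Ascoli then gives $u_m\to u$ uniformly on $\overline\Omega$, so $u=g$ on $\partial\Omega$.

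\emph{Uniqueness.} If $u_1,u_2$ are two solutions, then $w=u_1-u_2\in W^{2,n}_{\rm loc}\cap C(\overline\Omega)$. Using ${\mathcal P}^-(X-Y)\le{\mathcal P}^-(X)-{\mathcal P}^-(Y)\le{\mathcal P}^+(X-Y)$, $w$ is a strong subsolution of ${\mathcal P}^-(D^2w)-\gamma|Dw|=0$ and a strong supersolution of the ${\mathcal P}^+$ counterpart, with $w=0$ on $\partial\Omega$. Theorem \ref{prop:ABPnLp} then gives $w\equiv0$.
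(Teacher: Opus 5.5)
\textbf{The step that fails: identifying the limit.} Most of your argument is sound. This includes truncation, the uniform $C(\overline\Omega)$ bound from Theorem~\ref{prop:ABPnLp}, the uniform interior $W^{2,n}$ bound from Lemma~\ref{lem:strsolestimates} (using $\gamma_m\le\gamma$), the smallness of $(\gamma-\gamma_m)|Du_m|$ in $L^n$ via \eqref{eq:dual} and Theorem~\ref{th:MT1}, and your uniqueness argument, which the paper never writes out.

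The problem is that you keep the full unbounded $\gamma$ in the operator. You then invoke two results: the $L^n$-viscosity stability theorem of \cite{CCKS}, and the fact that a $W^{2,n}_{\rm loc}$ $L^n$-viscosity solution is a strong solution. Both are proved in \cite{CCKS} under a structure condition with a \emph{bounded} drift. The stability proof in particular rests on solving Pucci extremal equations with that same drift in $W^{2,n}$ on small balls. For $\gamma\in L^\Psi_+$ that solvability is exactly what you are proving, and on balls ${\rm supp}(\gamma)\Subset B$ fails. The paper also stresses that the $L^n$-viscosity theory in this critical case is not yet available. So this step is unsupported, and circular as written.

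\textbf{How to fix it.} The repair is short, and it removes your ``main obstacle'' as well.

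First, define a comparison function. Since $u\in W^{2,n}$ near ${\rm supp}(\gamma)$, you have $\gamma|Du|\in L^n(\Omega)$. Let $z\in W^{2,n}_{\rm loc}(\Omega)\cap C(\overline\Omega)$ be the strong solution of the drift-free problem ${\mathcal P}^-(D^2z)=f+\gamma|Du|$, $z=g$.

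Next, compare. Then $w_m=u_m-z$ vanishes on $\partial\Omega$ and satisfies ${\mathcal P}^-(D^2w_m)\le e_m\le{\mathcal P}^+(D^2w_m)$, where $e_m=\gamma_m|Du_m|-\gamma|Du|$ is supported in ${\rm supp}(\gamma)$.

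Then show $e_m\to0$. Write $e_m=-(\gamma-\gamma_m)|Du_m|+\gamma(|Du_m|-|Du|)$. The first term tends to $0$ in $L^n$ by your own computation. For the second, split $\gamma\le N+(\gamma-N)^+$ and use $Du_m\to Du$ in $L^n$ together with the uniform $W^{1,n}$ bound on $D(u_m-u)$.

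Finally, apply Theorem~\ref{prop:ABPnLp} with zero drift. It gives $u_m\to z$ uniformly on $\overline\Omega$, so $u=z$ is a strong solution with $u=g$ on $\partial\Omega$. No viscosity theory and no barriers are needed. Alternatively, keep a fixed truncation $\gamma_M$ in the operator and move $(\gamma_m-\gamma_M)|Du_m|$ to the right-hand side, so that \cite{CCKS} applies legitimately.

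\textbf{Comparison with the paper.} With this fix your proof is a valid alternative. The paper instead applies Schauder's fixed point theorem to the map $u\mapsto v$, where ${\mathcal P}^-(D^2v)-\gamma_m|Dv|=f+(\gamma-\gamma_m)|Du|$, $v=g$, and $m$ is fixed large relative to the a priori bound $K$. Its fixed point is immediately a strong solution with the right boundary values, so no weak limit of the nonlinear operator ever has to be identified. The cost is checking continuity and compactness of the map. Your route trades the fixed point for compactness plus an ABP comparison, and it uses the same two ingredients: the uniformity in Lemma~\ref{lem:strsolestimates} and the Trudinger--Moser smallness of the truncation error.
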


\begin{proof}
We will only do the proof for ${\mathcal P}^-(D^2u)-\gamma(x)|Du|=f(x)$. Let $\Omega_1$ be an open domain with smooth boundary such that
\[
{\rm supp}(\gamma)\Subset\Omega_1\Subset\Omega.
\]
We define
\[
B:=\{u\in W^{1,n}(\Omega_1)\cap C(\overline\Omega): \|u\|_{W^{2,n}(\Omega_1)}+\|u\|_{C(\overline\Omega)}\leq K, u=g\,\,\mbox{on}\,\,\partial \Omega\},
\]
where $K>0$ will be determined later. We remind that in the space $W^{1,n}(\Omega_1)\cap C(\overline\Omega)$ we consider the norm $\|u\|_{W^{1,n}(\Omega_1)}+\|u\|_{C(\overline\Omega)}$. Denote for $m=1,2,...$, $\gamma_m:=\min(\gamma,m)$ and $\tilde\gamma_m:=\gamma-\gamma_m$. For $u\in B$ we consider the equation
\[
\begin{cases}
{\mathcal P}^-(D^2v)-\gamma_m(x)|Dv|=f(x)+\tilde\gamma_m(x)|Du(x)|\quad \mbox{in }\Omega,\\
v=g\quad\mbox{on }\partial \Omega.
\end{cases}
\]
The calculation in Step 1 below shows that $\tilde\gamma_m|Du|\in L^n(\Omega)$. Let $v\in W^{2,n}_{\rm loc}(\Omega)\cap C(\overline\Omega)$ be the unique strong solution of the above equation. We denote it by $Tu$. A fixed point of $T$ will be a solution of \eqref{eq:Puccistrong}. To show the existence of a fixed point we will use the Schauder Fixed Point Theorem. Obviously the set $B$ is closed and convex.

{\bf Step 1.} $T:B\to B$ for an appropriate $K$.

Denote $a_n=(\frac{\alpha_n}{n})^{\frac{n-1}{n}}$. Similarly as in the proof of Theorem \ref{lem:strsolestimates} and using that $\tilde\gamma_m(x)=0$ if $\gamma_m(x)\leq m$ (which eliminates the need for the term $Ct$ in \eqref{eq:dual}), writing for $\varepsilon>0$
\[
 \tilde\gamma_m |u_{x_i}|=\frac{K\varepsilon}{a_n}\frac{\tilde\gamma_m}{\varepsilon}\frac{a_n|u_{x_i}|}{K},
 \]
we have for $\varepsilon>0$
\[
 (\tilde\gamma_m |u_{x_i}|)^n
 \leq
C\frac{K^n\varepsilon^n}{a_n^n}
\left(\exp\left(\alpha_n\left(\frac{|u_{x_i}|}{K}\right)^{\frac{n}{n-1}}\right)
+\frac{\tilde\gamma_m^n}{\varepsilon^n}\left(\ln\left(1+\frac{\tilde\gamma_m}{\varepsilon}\right)\right)^{n-1}\right).
\]
Integrating over $\Omega_1$ and using \eqref{eq:MT1}, we now obtain
\[
\int_{\Omega_1} (\tilde\gamma_m |u_{x_i}|)^ndx\leq C\frac{K^n\varepsilon^n}{a_n^n}\left(C(n,\Omega_1)+\int_{\Omega_1}\frac{\tilde\gamma_m^n}{\varepsilon^n}\left(\ln\left(1+\frac{\tilde\gamma_m}{\varepsilon}\right)\right)^{n-1}dx\right).
\]
We notice that the right hand side above can be made arbitrarily small by first choosing $\varepsilon$ small enough and then choosing $m$ large enough. Therefore, we showed that if $m:=m(K)$ is large enough, then for every $u\in B$
\begin{equation}\label{eq:a5}
\|\tilde\gamma_m|Du|\|_{L^n(\Omega_1)}\leq 1.
\end{equation}
Let $C_1:=C(n,\lambda,\Lambda,\gamma,\|\gamma\|_{L^n(\Omega)},
{\rm dist}(\Omega_1\partial\Omega), {\rm diam}(\Omega))$ be the constant from \eqref{lem:est1} and $C_2:=C(n,\lambda,\\
\|\gamma\|_{L^n(\Omega)}){\rm diam}(\Omega)$ be the constant from Theorem \ref{prop:ABPnLp}. It now follows from \eqref{lem:est1} and Theorem \ref{prop:ABPnLp} that for every $K$, if $m$ is sufficiently large then
\[
\|v\|_{W^{2,n}(\Omega_1)}+\|v\|_{C(\overline\Omega)}\leq (1+C_1+C_2)(1+\|f\|_{L^n(\Omega)}+\|g\|_{L^\infty(\partial\Omega)}).
\]
Hence for $K:=(1+C_1+C_2)(1+\|f\|_{L^n(\Omega)}+\|g\|_{L^\infty(\partial\Omega)})$, $T:B\to B$.

{\bf Step 2.} $T$ is continuous.

Suppose $u,u_k\in B, k=1,2,...$ and $\|u-u_k\|_{W^{1,n}(\Omega_1)}+\|u-u_k\|_{C(\overline\Omega)}\to 0$ as $k\to\infty$. 
Since $Tu_k\in B$, we have that, up to a subsequence, $Tu_k\rightharpoonup w$ in $W^{2,n}(\Omega_1)$ for some $w$ such that $\|w\|_{W^{2,n}(\Omega_1)}\leq K$. Using compactness of the Sobolev embedding, this implies that $\|Tu_k-w\|_{W^{1,n}(\Omega_1)}\to 0$ as $k\to\infty$. To conclude, it is enough to show that
$\|Tu_k-Tu\|_{C(\overline\Omega)}\to 0$. This will imply that $Tu$ restricted to $\Omega_1$ is equal to $w$.

The convergence of $\|Tu_k-Tu\|_{C(\overline\Omega)}$ will follow from Theorem \ref{prop:ABPnLp} once we establish that the $L^n(\Omega_1)$ norm of
$\tilde\gamma_m(x)(|Du_k(x)|-|Du(x)|)$ converges to $0$. Denote $w^k:=Du_k-Du$. We have $|\tilde\gamma_m(x)(|Du_k(x)|-|Du(x)|)|\leq 
\tilde\gamma_m(x)|Dw^k(x)|$. We now set $a_n=(\frac{\alpha_n}{2n})^{\frac{n-1}{n}}$ and write for $\epsilon>0$
\[
 \tilde\gamma_m |w^k_{x_i}|=\frac{K}{\varepsilon a_n}\tilde\gamma_m\varepsilon\frac{a_n|v_{x_i}|}{K},
 \]
 and estimate similarly to what we did in Step 1 to get,
 \[
 (\tilde\gamma_m |w^k_{x_i}|)^n
 \leq
C\frac{K^n}{\varepsilon^n a_n^n}
\left(\left(\exp\left(\frac{a_n|w^k_{x_i}|}{K}\right)^{\frac{n}{n-1}}-1\right)^n
+(\tilde\gamma_m\varepsilon)^n\left(\ln\left(1+\tilde\gamma_m\varepsilon\right)\right)^{n-1}\right).
\]
Since $(e^t-1)^n\leq e^{nt}-1$ for $t\geq 0$, we obtain,
\[
 (\tilde\gamma_m |w^k_{x_i}|)^n
 \leq
C\frac{K^n}{\varepsilon^n a_n^n}
\left(\exp\left(\frac{\alpha_n}{2}\left(\frac{|w^k_{x_i}|}{K}\right)^{\frac{n}{n-1}}\right)-1
+(\tilde\gamma_m\varepsilon)^n\left(\ln\left(1+\tilde\gamma_m\varepsilon\right)\right)^{n-1}\right).
\]
Integrating we now have
\begin{align*}
\int_{\Omega_1} (\tilde\gamma_m |w^k_{x_i}|)^ndx
&\leq 
C\frac{K^n}{\varepsilon^n a_n^n}
\int_{\Omega_1} \left(\exp\left(\frac{\alpha_n}{2}\left(\frac{|w^k_{x_i}|}{K}\right)^{\frac{n}{n-1}}\right)-1\right)dx
\\
&+ C\frac{K^n}{a_n^n}\int_{\Omega_1} 
\tilde\gamma_m^n\left(\ln\left(1+\tilde\gamma_m\varepsilon\right)\right)^{n-1}dx
\end{align*}
The second integral of the right hand side above converges to $0$ as $\varepsilon\to 0$ by the dominated convergence theorem. Regarding the first integral, we notice that the functions
\[
h_k(x):=\exp\left(\frac{\alpha_n}{2}\left(\frac{|w^k_{x_i}(x)|}{K}\right)^{\frac{n}{n-1}}\right)-1
\]
converge (up to a subsequence) to $0$ a.e. and are uniformly integrable as by \eqref{eq:MT1}, $\int_{\Omega_1} h_k^2 dx<C$ for some $C$ and all $k\geq 1$. Thus, if we denote $b_k:=\int_{\Omega_1} h_k(x) dx$ and take $\varepsilon_k:=b_k^{\frac{1}{2n}}$ then
\[
C\frac{K^n}{\varepsilon_k^n a_n^n}
\int_{\Omega_1} h_k(x)dx=C\frac{K^n}{a_n^n}b_k^{\frac{1}{2}}\to 0\quad\mbox{as}\,\,k\to\infty.
\]
Since the above arguments apply for every subsequence of $u_k$, we conclude that $\|Tu_k-w\|_{W^{1,n}(\Omega_1)}+\|Tu_k-Tu\|_{C(\overline\Omega)}\to 0$ along the whole sequence.

{\bf Step 3.} $T(B)$ is compact.

Let $u_k\in B, k=1,2,...$ Then for some subsequence, still denoted by $u_k$, we have $u_k\rightharpoonup \bar u,Tu_k\rightharpoonup w$ in $W^{2,n}(\Omega_1)$ for some $\bar u,w$ such that $\|\bar u\|_{W^{2,n}(\Omega_1)}+\|\bar u\|_{C(\overline\Omega_1)}\leq K$ and $\|w\|_{W^{2,n}(\Omega_1)}+\|w\|_{C(\overline\Omega_1)}\leq K$. Thus, by compactness of the Sobolev embedding, $\|u_k-\bar u\|_{W^{1,n}(\Omega_1)}+\|Tu_k-w\|_{W^{1,n}(\Omega_1)}\to 0$ as $k\to\infty$. We extend $\bar u$ to a function in $B$. We can now repeat the proof from Step 2 to show that $T\bar u=w$ and $\|Tu_k-T\bar u\|_{W^{1,n}(\Omega_1)}+\|Tu_k-T\bar u\|_{C(\overline\Omega)}\to 0$ as $k\to\infty$.
\end{proof}

\section{ABP Maximum Principle}\label{sec:ABP}

In this section we assume that $f\in L^n(\Omega),\gamma\in L^\Psi_+(\Omega)$. We extend $f$ and $\gamma$ to ${\mathbb R}^n$ by setting $f(x)=\gamma(x)=0$ for $x\in {\mathbb R}^n\setminus \Omega$. Let for $\delta>0$, $\eta_\delta$ be the standard mollifier. Denote for $m=1,2,...$, $\bar\gamma_m:=\min(\gamma,m)$ and $
\bar\gamma_{m,\delta}=\eta_\delta*\bar\gamma_m$. Let $\varepsilon>0$ and let $\delta(m)$ be such that 
\begin{equation}\label{eq:gammatildemconv}
\int_{{\mathbb R}^n}|\bar\gamma_m-\bar\gamma_{m,\delta(m)}|^n\left(1+\left(\ln\left(1+|\bar\gamma_m-\bar\gamma_{m,\delta(m)}|\right)\right)^{n-1}\right)dx\leq \frac{1}{m}.
\end{equation}
We can assume that $\delta(m)\to 0$ as $m\to\infty$. We set $\gamma_m:=\bar\gamma_{m,\delta(m)}, \tilde\gamma_m:=\gamma-\gamma_m$. We begin with a technical lemma.

\begin{lemma}\label{lem:gammaconv}
$f\in L^n(\Omega),\gamma\in L^\Psi_+(\Omega)$. Then for every $\varepsilon>0$,
\begin{equation}\label{eq:gammamconv}
\int_{{\mathbb R}^n}|\tilde\gamma_m|^n\left(\ln\left(1+\frac{|\tilde\gamma_m|}{\varepsilon}\right)\right)^{n-1}dx\to 0\quad\mbox{as}\,\,m\to\infty.
\end{equation}
Moreover, if $U_R$ is a ball of radius $R>0$ and $U_{R+r}$ is the ball with the same center and radius $R+r$, $r>0$, then for sufficiently large $m$ we have
\begin{equation}\label{eq:gammamconv1}
\int_{U_R}|\gamma_m|^n\left(\ln\left(1+|\gamma_m|\right)\right)^{n-1}dx\leq \int_{U_{R+r}}|\gamma|^n\left(\ln\left(1+|\gamma|\right)\right)^{n-1}dx.
\end{equation}
Hence, we can consider the dependence of $C$ in \eqref{lem:est1} on $\gamma_m$ to be the same as the dependence of $C$ in \eqref{lem:est1} on $\gamma$.
\end{lemma}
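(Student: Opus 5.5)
The plan is to split $\tilde\gamma_m=\gamma-\gamma_m=(\gamma-\bar\gamma_m)+(\bar\gamma_m-\bar\gamma_{m,\delta(m)})$ and to use the elementary quasi-subadditivity of $\Theta_\varepsilon(s):=s^n(\ln(1+s/\varepsilon))^{n-1}$. Since $\Theta_\varepsilon$ is nondecreasing, $\Theta_\varepsilon(a+b)\le\Theta_\varepsilon(2\max(a,b))\le \Theta_\varepsilon(2a)+\Theta_\varepsilon(2b)$. Moreover $\ln(1+2s/\varepsilon)\le \ln 2+\ln(1+s/\varepsilon)$, so $\Theta_\varepsilon(2s)\le C_n\big(s^n+\Theta_\varepsilon(s)\big)$. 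Finally, $\ln(1+s/\varepsilon)\le \ln(1+1/\varepsilon)+\ln(1+s)$ gives $\Theta_\varepsilon(s)\le C(n,\varepsilon)\big(s^n+s^n(\ln(1+s))^{n-1}\big)$. Thus it suffices to show that both pieces tend to $0$ in the quantity $\int s^n(1+(\ln(1+s))^{n-1})$, for each fixed $\varepsilon$.

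\textbf{The second piece.} For $\bar\gamma_m-\bar\gamma_{m,\delta(m)}$ this is exactly \eqref{eq:gammatildemconv}, which bounds the integral by $1/m\to0$.

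\textbf{The first piece.} For $\gamma-\bar\gamma_m=(\gamma-m)^+$, note that $0\le(\gamma-m)^+\le\gamma\,\mathbbm{1}_{\{\gamma>m\}}$. Hence
\[
\int(\gamma-m)^{+n}\big(1+(\ln(1+(\gamma-m)^+))^{n-1}\big)dx\le\int_{\{\gamma>m\}}\big(\gamma^n+\Psi(\gamma)\big)dx .
\]
This tends to $0$ by dominated convergence, since $\Psi(\gamma)\in L^1$ and $\gamma^n\le \Psi(\gamma)/(\ln 2)^{n-1}$ on $\{\gamma\ge1\}$, while $\gamma$ is supported in the bounded set $\Omega$. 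This proves \eqref{eq:gammamconv}.

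\textbf{The local bound \eqref{eq:gammamconv1}.} Jensen's inequality applies because $s\mapsto\Psi(s)=s^n(\ln(1+s))^{n-1}$ is convex and increasing on $[0,\infty)$: it is a product of nonnegative nondecreasing functions, and its second derivative is checked directly to be nonnegative (this is the one computation to verify, and it is routine since both factors are convex for $n\ge2$, except that $(\ln(1+s))^{n-1}$ is concave when $n=2$; in that case one checks $\frac{d^2}{ds^2}[s^2\ln(1+s)]\ge0$ by hand).

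Granting convexity, Jensen with the probability kernel $\eta_\delta$ gives $\Psi(\eta_\delta*\bar\gamma_m)\le\eta_\delta*\Psi(\bar\gamma_m)$ pointwise. Since $\delta(m)\to0$, for $m$ large we have $\delta(m)<r$. Integrating over $U_R$ and using Fubini,
\[
\int_{U_R}\Psi(\gamma_m)\le\int_{U_{R+\delta(m)}}\Psi(\bar\gamma_m)\le\int_{U_{R+r}}\Psi(\gamma),
\]
where the last step uses $\bar\gamma_m\le\gamma$ and the monotonicity of $\Psi$.

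\textbf{The final remark.} The closing assertion follows from \eqref{eq:gammamconv1}. The modulus of smallness of $\int_{B_R}\Psi(\gamma_m)$, which is what governs the constant in \eqref{lem:est1} through the choice of $R(\varepsilon)$ in the proof of Lemma \ref{lem:strsolestimates}, is controlled by that of $\gamma$ at radius $2R$. The same holds for $\|\gamma_m\|_{L^n}\le\|\gamma\|_{L^n}$, by Young's inequality and $\bar\gamma_m\le\gamma$. The main, though minor, obstacle is the convexity of $\Psi$ needed for Jensen. If it proved awkward, I would instead bound $\Psi(a)$ by a convex majorant comparable to $\Psi$, accepting a constant factor, which is harmless for the intended use.
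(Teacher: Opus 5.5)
Your proof is correct and essentially follows the paper's: the same splitting $\tilde\gamma_m=(\gamma-\bar\gamma_m)+(\bar\gamma_m-\gamma_m)$, dominated convergence for the truncation error, \eqref{eq:gammatildemconv} for the mollification error, and Jensen's inequality with $\delta(m)<r$ for \eqref{eq:gammamconv1}. The one difference is in \eqref{eq:gammamconv}. There you use only monotonicity, $\Theta_\varepsilon(a+b)\le\Theta_\varepsilon(2a)+\Theta_\varepsilon(2b)$, where the paper uses the convexity bound $\frac12\Theta_\varepsilon(2a)+\frac12\Theta_\varepsilon(2b)$. You also spell out the reduction to the quantity in \eqref{eq:gammatildemconv} that the paper hides in its constant $C(\varepsilon)/m$. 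As a result, convexity is needed only for the Jensen step.

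One correction in that step: your parenthetical reason for the convexity of $\Psi$ is wrong. The factor $(\ln(1+s))^{n-1}$ is not convex on $[0,\infty)$ for any $n\ge 2$, since its second derivative is $\frac{(n-1)(\ln(1+s))^{n-3}(n-2-\ln(1+s))}{(1+s)^2}$, which is negative once $\ln(1+s)>n-2$. So the product-of-convex-factors shortcut fails for every $n$, not only for $n=2$. The conclusion is still true. With $L=\ln(1+s)$ and $s>0$,
\[
\Psi''(s)=(n-1)s^{n-2}L^{n-2}\Big(nL+\frac{2ns}{1+s}-\frac{s^2}{(1+s)^2}\Big)+(n-1)(n-2)\frac{s^nL^{n-3}}{(1+s)^2}\ge 0,
\]
because $\frac{s^2}{(1+s)^2}\le\frac{s}{1+s}$. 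Hence your convex-majorant fallback, which would give \eqref{eq:gammamconv1} only up to a constant, is not needed.
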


\begin{proof}
Convergence \eqref{eq:gammamconv} is the consequence of the convexity of the function $s\to s^n(\ln(1+s))^{n-1}, s\geq 0$, dominated convergence theorem and \eqref{eq:gammatildemconv} since we have
\[
\begin{split}
&\int_{{\mathbb R}^n}|\tilde\gamma_m|^n\left(\ln\left(1+\frac{|\tilde\gamma_m|}{\varepsilon}\right)\right)^{n-1}dx
\leq 2^{n-1}\int_{{\mathbb R}^n}|\gamma-\bar\gamma_m|^n\left(\ln\left(1+\frac{2|\gamma-\bar\gamma_m|}{\varepsilon}\right)\right)^{n-1}dx
\\
&
\qquad\qquad+2^{n-1}\int_{{\mathbb R}^n}|\bar\gamma_m-\gamma_m|^n\left(\ln\left(1+\frac{2|\bar\gamma_m-\gamma_m|}{\varepsilon}\right)\right)^{n-1}dx
=:a_m+\frac{C(\varepsilon)}{m}\to 0.
\end{split}
\]

 Regarding \eqref{eq:gammamconv1}, using convexity of $s\to s^n(\ln(1+s))^{n-1}$ and Jensen's inequality, we have for $m$ such that $\delta(m)<r$,
\[
\begin{split}
\int_{U_R}|\gamma_m|^n\left(\ln\left(1+|\gamma_m|\right)\right)^{n-1}dx
&\leq \int_{U_R}\int_{B_{\delta(m)}}\eta_{\delta(m)}(y)|\gamma(x-y)|^n\left(\ln\left(1+|\gamma(x-y)|\right)\right)^{n-1}dx
\\
&
\leq \int_{U_{R+r}}|\gamma|^n\left(\ln\left(1+|\gamma|\right)\right)^{n-1}dx.
\end{split}
\]
\end{proof}

\begin{prop}\label{prop:strongsolexpterm}
Let $\Omega_2$ be a bounded domain which satisfies a uniform exterior cone condition. Let $f\in L^n(\Omega_2),\gamma\in L^\Psi_+(\Omega_2),{\rm supp}(\gamma)\Subset\Omega_2$. Let $\Omega_1$ be a domain with smooth boundary such that ${\rm supp}(\gamma)\Subset\Omega_1\Subset\Omega_2$. Then there exist $K=K(n,\lambda,\Lambda,\gamma,\|\gamma\|_{L^n(\Omega_2)},
{\rm dist}(\Omega_1,\partial\Omega_2), {\rm diam}(\Omega_2))>0$ and $\varepsilon=\varepsilon_0(n,\lambda,\Lambda,\gamma,\|\gamma\|_{L^n(\Omega_2)},
{\rm dist}(\Omega_1,\partial\Omega_2), {\rm diam}(\Omega_2))>0$ such that for every $0<\varepsilon\leq\varepsilon_0$ there exists a strong solution $u\in W^{2,n}_{\rm loc}(\Omega_2)\cap C(\overline \Omega_2)$ of
\begin{equation}\label{eq:Puccistrongexp}
\begin{cases}
{\mathcal P}^{\pm}(D^2u)\pm\gamma(x)|Du|=f(x)+\frac{K\varepsilon}{2}
\sum_{i=1}^n\exp\left(2^{\frac{n}{n-1}}\left(\frac{|u_{x_i}|}{K}\right)^{\frac{n}{n-1}}\right){\bf 1}_{\Omega_1}(x)\quad \mbox{in }\Omega_2,\\
u=0\quad\mbox{on }\partial \Omega_2
\end{cases}
\end{equation}
and such that 
\begin{equation}\label{eq:Pucciexpest1}
\|u\|_{C(\overline\Omega_2)}\leq C(\|f\|_{L^n(\Omega_2)}+\varepsilon),
\end{equation}
where $C=C(n,\lambda,\Lambda,\gamma,\|\gamma\|_{L^n(\Omega)},
{\rm dist}(\Omega_1,\partial\Omega_2), {\rm diam}(\Omega_2))$. The dependence of all the constants on $\gamma$ is the same as the dependence of $C$ on $\gamma$ in Lemma \ref {lem:strsolestimates}.
\end{prop}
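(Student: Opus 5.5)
Run a Schauder fixed point argument in the same functional setting as the proof of Theorem \ref{th:strsol}; only the data entering the Pucci equation change. I treat only ${\mathcal P}^-(D^2u)-\gamma|Du|=\dots$; the other sign is symmetric.

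\textbf{Setting and map.} Fix $K>0$ (to be chosen) and let
\[
B:=\{w\in W^{1,n}(\Omega_1)\cap C(\overline\Omega_2): \|w\|_{W^{2,n}(\Omega_1)}+\|w\|_{C(\overline\Omega_2)}\le K,\ w=0 \mbox{ on }\partial\Omega_2\},
\]
with the norm $\|\cdot\|_{W^{1,n}(\Omega_1)}+\|\cdot\|_{C(\overline\Omega_2)}$. For $w\in B$, define $Tw$ as the unique strong solution, given by Theorem \ref{th:strsol}, of
\[
{\mathcal P}^-(D^2v)-\gamma|Dv|=f+\tfrac{K\varepsilon}{2}\sum_{i=1}^n\exp\Big(2^{\frac{n}{n-1}}\big(|w_{x_i}|/K\big)^{\frac{n}{n-1}}\Big){\bf 1}_{\Omega_1}
\]
in $\Omega_2$, with $v=0$ on $\partial\Omega_2$. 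A fixed point of $T$ solves \eqref{eq:Puccistrongexp}.

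\textbf{Step 1: $T$ is well defined and maps $B$ into $B$.} Put $h_w$ for the extra term. Since $\|w\|_{W^{2,n}(\Omega_1)}\le K$, the function $w_{x_i}/K$ has $W^{1,n}(\Omega_1)$ norm at most $1$. Taking the $n$-th power of $h_w$ gives an exponent $n2^{\frac n{n-1}}(|w_{x_i}|/K)^{\frac n{n-1}}$. I would make sure this exponent is at most $\alpha_n$; if necessary, replace $K$ by a fixed multiple of $K$ in $B$, so that the bound becomes $\|w\|_{W^{2,n}(\Omega_1)}\le cK$ with $c$ dimensional. Theorem \ref{th:MT1} then gives
\[
\|h_w\|_{L^n(\Omega_1)}\le \tfrac{K\varepsilon}{2}\,n\,C(n,\Omega_1)^{1/n}=:K\varepsilon C_0 .
\]
Next apply Lemma \ref{lem:strsolestimates} with $\Omega'=\Omega_1$, and Theorem \ref{prop:ABPnLp} on $\Omega_2$ (note $\gamma\in L^\Psi\subset L^n$). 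With constants $C_1,C_2$ independent of $K$ and $\varepsilon$,
\[
\|Tw\|_{W^{2,n}(\Omega_1)}+\|Tw\|_{C(\overline\Omega_2)}\le (C_1+C_2)\big(\|f\|_{L^n}+K\varepsilon C_0\big).
\]
Choose $K:=2(C_1+C_2)(1+\|f\|_{L^n(\Omega_2)})$ and $\varepsilon_0$ with $(C_1+C_2)C_0\varepsilon_0\le 1/2$. Then $T:B\to B$ for every $\varepsilon\le\varepsilon_0$. The same ABP bound gives \eqref{eq:Pucciexpest1}, because $\|Tw\|_{C}\le C_2(\|f\|_{L^n}+K C_0\varepsilon)$. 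Strictly, $K$ depends on $\|f\|$, which the statement does not list. I expect to avoid this by taking $K$ independent of $f$ and absorbing the $f$-contribution through homogeneity, or by reading the statement as allowing this dependence.

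\textbf{Steps 2--3: continuity and compactness.} Argue exactly as in Steps 2--3 of Theorem \ref{th:strsol}. Bounded sets in $W^{2,n}(\Omega_1)$ are weakly compact, which gives $W^{1,n}$-strong subsequential limits of $Tw_k$. Uniform convergence in $C(\overline\Omega_2)$ follows from Theorem \ref{prop:ABPnLp} applied to $Tw_k-Tw$, since this difference is a strong sub/supersolution of a Pucci equation with right-hand side $\pm|h_{w_k}-h_w|$.

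\textbf{Main obstacle.} The hard point is showing $h_{w_k}\to h_w$ in $L^n(\Omega_1)$ when $w_k\to w$ in $W^{1,n}(\Omega_1)$. Along a subsequence $Dw_k\to Dw$ a.e., so $h_{w_k}\to h_w$ a.e. For uniform integrability of $|h_{w_k}|^n$, I would bound $\int|h_{w_k}|^{2n}$ uniformly, which requires the exponent $2n\,2^{\frac n{n-1}}(\cdot)^{\frac n{n-1}}\le\alpha_n$. This may force a larger dimensional factor in the definition of $B$ than in Step 1; I would fix that factor at the outset so that both Step 1 and this estimate go through. Vitali's theorem then gives $L^n$ convergence, and as before every subsequence has a further subsequence with the same limit, so the whole sequence converges. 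Schauder's theorem then yields the fixed point.
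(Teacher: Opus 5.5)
Your proposal is correct and follows the paper's proof essentially step for step. The paper also runs Schauder on the same set $B$ with radius $M$ and $K=2M(2n/\alpha_n)^{\frac{n-1}{n}}$, which is exactly the dimensional factor you anticipated: it makes $h_w$ bounded in $L^{2n}(\Omega_1)$ by Theorem \ref{th:MT1}, giving the $L^n$ bound by H\"older and the Vitali argument for continuity; Lemma \ref{lem:strsolestimates} plus ABP give $T(B)\subset B$, and weak $W^{2,n}$ compactness gives Steps 2--3.

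On your side remark, the paper's $K$ (and its $\varepsilon_0$) also depend on $\|f\|_{L^n}$ through $M$, so your second reading is the right one. Homogeneity would not remove this dependence, since $u\mapsto u/t$ only trades $(f,K)$ for $(f/t,K/t)$.
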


\begin{proof} The proof is similar to the proof of Theorem \ref{th:strsol} and uses the Schauder Fixed Point Theorem.
We define
\[
B:=\{u\in W^{1,n}(\Omega_1)\cap C(\overline\Omega_2): \|u\|_{W^{2,n}(\Omega_1)}+\|u\|_{C(\overline\Omega_2)}\leq M, u=0\,\,\mbox{on}\,\,\partial \Omega_2\},
\]
where $M>0$ will be determined later. In the space $W^{1,n}(\Omega_1)\cap C(\overline\Omega_2)$ we consider the norm $\|u\|_{W^{1,n}(\Omega_1)}+\|u\|_{C(\overline\Omega_2)}$. 

Let $K= 2M (\frac{2n}{\alpha_n})^{\frac{n-1}{n}}$. Then, if $u\in B$, by \eqref{eq:MT1}
\begin{equation}\label{eq:L2n}
h_u:=\sum_{i=1}^n\exp\left(2^{\frac{n}{n-1}}\left(\frac{|u_{x_i}|}{K}\right)^{\frac{n}{n-1}}\right)\in L^{2n}(\Omega_1),\quad 
 \|h_u\|_{L^{2n}(\Omega_1)}\leq C(n,\Omega_1).
 \end{equation}
For $u\in B$ we consider the equation
\[
\begin{cases}
{\mathcal P}^-(D^2v)-\gamma(x)|Dv|=f(x)+K\varepsilon
\sum_{i=1}^n\exp\left(2^{\frac{n}{n-1}}\left(\frac{|u_{x_i}|}{K}\right)^{\frac{n}{n-1}}\right){\bf 1}_{\Omega_1}(x)\quad \mbox{in }\Omega_2,\\
v=0\quad\mbox{on }\partial \Omega_2.
\end{cases}
\]
Let $v\in W^{2,n}(\Omega_1)\cap C(\overline\Omega_2)$ be its unique strong solution given by Theorem \ref{th:strsol}. We denote it by $Tu$. 

{\bf Step 1.} $T:B\to B$ for an appropriate $M$. 

From Theorem \ref{prop:ABPnLp}, Lemma \ref {lem:strsolestimates} and \eqref{eq:L2n}, we have
\begin{equation}\label{eq:vw2nC}
 \begin{split}
 &
 \|v\|_{W^{2,n}(\Omega_1)}+\|v\|_{C(\overline\Omega_2)}
\\
&
\qquad
\le
C(n,\lambda,\Lambda,\gamma,\|\gamma\|_{L^n(\Omega_2)},
{\rm dist}(\Omega_1,\partial\Omega_2), {\rm diam}(\Omega_2))
\left(
\|f\|_{L^n(\Omega_2)}+K\varepsilon |\Omega_1|^{\frac{1}{2n}}C(n,\Omega_1)
\right).
\end{split}
\end{equation}
Thus, if we take $M=C(n,\lambda,\Lambda,\gamma,\|\gamma\|_{L^n(\Omega_2)},
{\rm dist}(\Omega_1,\partial\Omega_2), {\rm diam}(\Omega_2))
\left(
\|f\|_{L^n(\Omega)}+1
\right)$, and then $\epsilon_0$ such that $K\varepsilon_0 |\Omega_1|^{\frac{1}{2n}}C(n,\Omega_1)=1$, we have $Tu\in B$.

{\bf Step 2.} $T$ is continuous.

We begin exactly the same as in the proof of Step 2 of Theorem \ref{th:strsol}. Suppose $u,u_k\in B, k=1,2,...$ and $\|u-u_k\|_{W^{1,n}(\Omega_1)}+\|u-u_k\|_{C(\overline\Omega_2)}\to 0$ as $k\to\infty$. 
Since $Tu_k\in B$, we have that, up to a subsequence, $Tu_k\rightharpoonup w$ in $W^{2,n}(\Omega_1)$ for some $w$ such that $\|w\|_{W^{2,n}(\Omega_1)}\leq K$. Using compactness of the Sobolev embedding, this implies that $\|Tu_k-w\|_{W^{1,n}(\Omega_1)}\to 0$ as $k\to\infty$. 

It remains to show that $\|Tu_k-Tu\|_{C(\overline\Omega_2)}\to 0$. This will also give as that $Tu=w$ on $\Omega_1$. Up to a subsequence we have $Du_k\to Du$ a.e. in $\Omega_1$. Hence, since all functions $h_{u_k}$ satisfy \eqref{eq:L2n}, we have $\|h_u-h_{u_k}\|_{L^n(\Omega_1)}\to 0$ as $k\to\infty$.

Finally, since the above arguments apply for every subsequence of $u_k$, we conclude that $\|Tu_k-w\|_{W^{1,n}(\Omega_1)}+\|Tu_k-Tu\|_{C(\overline\Omega_2)}\to 0$ along the whole sequence.

{\bf Step 3.} $T(B)$ is compact.

We repeat the same arguments as in Step 3 of Theorem \ref{th:strsol}

Hence, by the Schauder Fixed Point Theorem, $T$ has a fixed point which is a solution of \eqref{eq:Puccistrongexp}. Estimate \eqref{eq:Pucciexpest1} follows from \eqref{eq:vw2nC}.
\end{proof}

\begin{theorem}\label{prop:ABPLnviscosity}
Let $\Omega$ be a bounded domain, $f\in L^n(\Omega),\gamma\in L^\Psi_+(\Omega)$. There exist $C=C(n,\lambda,\|\gamma\|_{L^n(\Omega)})$ such that if
$u\in C(\overline \Omega)$ is an $L^n$-viscosity subsolution (resp., supersolution) of 
\begin{equation}\label{eq:2n-Ln}
{\mathcal P}^-(D^2u)-\gamma(x)|Du|=f(x) \,\,\,\mbox{in}\,\,\Omega
\end{equation}
\[
\left(\mbox{resp.,}\,\,
{\mathcal P}^+(D^2u)+\gamma(x)|Du|=f(x) \,\,\,\mbox{in}\,\,\Omega
\right)
\]
then
\begin{equation}\label{eq:ABPnstatementLnvisc}
\max_{\overline \Omega} u\leq \max_{\partial\Omega} u+C{\rm diam}(\Omega)\|f_+\|_{L^n(\Gamma^+(u))}
\end{equation}
\[
\left(\mbox{resp.,}\,\,
\min_{\overline \Omega} u\geq \min_{\partial\Omega} u-C{\rm diam}(\Omega)\|f_-\|_{L^n(\Gamma^+(-u))}\right).
\]
\end{theorem}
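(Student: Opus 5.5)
We only treat subsolutions; the supersolution case follows by applying the result to $-u$, using ${\mathcal P}^+(X)=-{\mathcal P}^-(-X)$. The plan follows the scheme of \cite{KS-JMPA} described in the Introduction: perturb $u$ into a subsolution of a smooth approximating problem, regularize it, and apply the classical argument (Theorem \ref{prop:ABPnLp}) to smooth functions.

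\emph{Reductions.} Since $\Gamma^+(u)$ and the constant are stable under small domain changes, we first reduce to the following situation. Take $\Omega'\Subset\Omega$ with smooth boundary, and shrink so that $\max_{\partial\Omega'}u\to\max_{\partial\Omega}u$. We may also cut $\gamma$ off so that ${\rm supp}(\gamma)\Subset\Omega_1\Subset\Omega_2=\Omega'$. This is legitimate because restricting $\gamma$ only weakens the equation for $-\gamma|Du|$ (replace $\gamma$ by $\gamma{\bf 1}_{\Omega_1}$ and put the remainder into $f$ near the boundary; since $u$ is Lipschitz-free we instead use $\Omega_1$ exhausting $\Omega'$). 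We also replace $f$ by $f^+$.

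\emph{Step 1 (perturbation).} With $\gamma_m,\tilde\gamma_m$ as in Section \ref{sec:ABP}, apply Proposition \ref{prop:strongsolexpterm} with $\gamma$ and $f$ replaced by $\gamma$ and $-f^+{\bf 1}_{\Omega\setminus\Gamma^+}$-type data, or more simply with right-hand side $\tilde\gamma_m$-related terms. Concretely, let $w_m$ be the strong solution of
\[
{\mathcal P}^+(D^2w_m)+\gamma|Dw_m|=-|f_m-f|-\tfrac{K\varepsilon_m}{2}\textstyle\sum_i\exp\bigl(2^{\frac{n}{n-1}}(|w_{m,x_i}|/K)^{\frac{n}{n-1}}\bigr){\bf 1}_{\Omega_1},
\]
with $w_m=0$ on $\partial\Omega_2$, where $f_m$ are smooth, $f_m\to f^+$ in $L^n$, and $\varepsilon_m\to0$. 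By \eqref{eq:Pucciexpest1}, $\|w_m\|_\infty\to0$. We then check that $u_m:=u-w_m$ is an $L^n$-viscosity subsolution of
\[
{\mathcal P}^-(D^2u_m)-\gamma_m|Du_m|\le f_m+\tilde\gamma_m|Du_m|+\dots,
\]
after which the inequality \eqref{eq:dual} shows that the unbounded term $\tilde\gamma_m|Du_m|$ is absorbed by the exponential term $\frac{K\varepsilon}{2}\sum\exp(\cdots)$ plus a term of size $\int\tilde\gamma_m^n(\ln(1+\tilde\gamma_m/\varepsilon))^{n-1}$, which tends to $0$ by Lemma \ref{lem:gammaconv}. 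The verification uses the test-function definition: if $u_m-\varphi$ has a maximum, then $u-(\varphi+w_m)$ does too, with $\varphi+w_m\in W^{2,n}_{\rm loc}$, and the Pucci inequalities combine. The outcome is a subsolution of an equation with smooth $\gamma_m$ and data $f_m$, plus an error whose $L^n$ norm tends to $0$ and, crucially, \emph{no} unbounded coefficient.

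\emph{Step 2 (regularization and ABP).} The equation for $u_m$ now has bounded coefficients, so the known $L^n$-viscosity ABP with bounded drift from \cite{CCKS}, or sup-convolution plus mollification as in \cite{KS-JMPA}, applies. The gradient term with $\gamma_m$ is then treated by linearization with a constant depending only on $\|\gamma_m\|_{L^n}\le C\|\gamma\|_{L^n}$, as in Theorem \ref{prop:ABPnLp}. This gives
\[
\max u_m\le\max_{\partial}u_m+C\,{\rm diam}(\Omega)\bigl(\|f_m\|_{L^n(\Gamma^+(u_m))}+o(1)\bigr).
\]
Letting $m\to\infty$, we have $u_m\to u$ uniformly. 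Upper semicontinuity of contact sets under uniform convergence ($\limsup\Gamma^+(u_m)\subset\Gamma^+(u)$ up to boundary effects, handled as in \cite{CCKS}) yields \eqref{eq:ABPnstatementLnvisc}.

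\emph{Main obstacle.} The hard part is Step 1: making the absorption precise. One must verify that the exponential term generated by Proposition \ref{prop:strongsolexpterm} dominates $\tilde\gamma_m|D(\varphi+w_m)|$ pointwise a.e. near the touching point, while the leftover has vanishing $L^n$ norm. This requires choosing $\varepsilon=\varepsilon_m\to0$ slowly relative to $m$, which is exactly the balance encoded in \eqref{eq:gammamconv}.
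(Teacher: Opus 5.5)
There is a genuine gap. Your architecture matches the paper's: subtract a strong solution from Proposition \ref{prop:strongsolexpterm}, absorb $\tilde\gamma_m|Du|$ via \eqref{eq:dual}, regularize, run the classical ABP argument, and pass to the limit in the contact sets. But the absorption step is assembled incorrectly, and the idea that actually closes the argument is missing.

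\textbf{The absorption step.} The exponential term in the equation of $w_m$ is a function of $Dw_m$, so it cannot absorb $\tilde\gamma_m|Du_m|$; the two gradients are unrelated. The paper applies \eqref{eq:dual} first to $u$ itself, i.e.\ to the test function $\varphi+w^m$ touching $u$. This makes $u$ an $L^n$-viscosity subsolution of an equation with:
\begin{itemize}
\item drift $\gamma_m+C\varepsilon{\bf 1}_{\Omega_1}$;
\item an extra term $-K\varepsilon\sum_i\exp((|u_{x_i}|/K)^{\frac{n}{n-1}}){\bf 1}_{\Omega_1}$ on the left;
\item right-hand side $f+CK\tilde\gamma_m(\ln(1+\tilde\gamma_m/\varepsilon))^{\frac{n-1}{n}}$.
\end{itemize}
Only then is $w^m$ subtracted. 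The exponential of $Du=Du^m+Dw^m$ is split by convexity into a $w^m$-part, cancelled by the equation of $w^m$, and a $u^m$-part.

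For this to work, $w^m$ must solve \eqref{eq:Puccistrongexp1}. That is a ${\mathcal P}^-$-equation with the same drift $\gamma_m+C\varepsilon{\bf 1}_{\Omega_1}$; with the full $\gamma$ as in your $w_m$, the unbounded drift just reappears on $u^m$. Its right-hand side is the positive quantity $g_{\varepsilon,m}+\frac{K\varepsilon}{2}\sum_i\exp(\cdots)$, where $g_{\varepsilon,m}$ contains both $\tilde f_m$ and the logarithmic remainder. This bounds ${\mathcal P}^-(D^2w^m)$ from below, which is what ${\mathcal P}^-(D^2(u-w^m))\le{\mathcal P}^-(D^2u)-{\mathcal P}^-(D^2w^m)$ requires. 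Your $w_m$ solves a ${\mathcal P}^+$-equation with negative right-hand side, which bounds ${\mathcal P}^-(D^2w_m)$ only from above. So with your signs, $u-w_m$ is not shown to be a subsolution of anything useful.

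\textbf{The residual exponential term (the missing idea).} The residual $\frac{K\varepsilon}{2}\sum_i\exp(2^{\frac{n}{n-1}}(|u^m_{x_i}|/K)^{\frac{n}{n-1}})$ can never be removed: the convexity splitting always leaves it. It is neither a bounded drift nor an $L^n$ error, because $Du^m$ of a merely continuous viscosity subsolution is not controlled. So your claim that the perturbed equation has \emph{no unbounded coefficient} is false, and Step 2 as written does not apply. The bounded-drift ABP of \cite{CCKS} does not cover a superlinear gradient term. Even for the drift $\gamma_m$ alone, it would give a constant depending on $\sup\gamma_m\to\infty$, not on $\|\gamma_m\|_{L^n}$.

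The paper closes the argument as follows.
\begin{itemize}
\item All measurable data were put into $w^m$, so $u^m$ is a $C$-viscosity subsolution with continuous $\gamma_m,f_m$.
\item Mollified sup-convolutions give smooth $v^m_k$ satisfying \eqref{eq:umdelta1} pointwise.
\item On the restricted contact set $\Gamma^+_{d_{m,k}}(v^m_k,\Omega_{2/k})$ one has $|Dv^m_k|<M$, with $M$ independent of $m,k$ because the $u^m$ are uniformly bounded. With $K$ fixed, the exponential term is therefore at most $L\varepsilon$ there.
\item The classical ABP proof with $L^n$ drift then gives the estimate with an extra $L\varepsilon|\Omega|^{\frac1n}$. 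This disappears after $k\to\infty$, $m\to\infty$, and finally $\varepsilon\to0$.
\end{itemize}
This gradient bound on the contact set is what your proposal lacks.

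\textbf{The preliminary cut-off.} Your cut-off of $\gamma$ is also not legitimate. Decreasing $\gamma$ strengthens the subsolution inequality, and the remainder $\gamma{\bf 1}_{\Omega\setminus\Omega_1}|Du|$ is uncontrolled. The paper avoids any cut-off by extending $\gamma$ by zero and taking $\Omega_1,\Omega_2$ to be balls containing $\Omega$.
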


\begin{proof}
We will only prove the statement for $L^n$-viscosity 
subsolutions. Without loss of generality we suppose that $0\in\Omega$. Let $\Omega_1, \Omega_2$ be respectively the open balls centered at $0$ with radii ${\rm diam}(\Omega)$ and $2{\rm diam}(\Omega)$. Let $\gamma_m, m\geq 1$ be from Lemma \ref{lem:gammaconv} and we assume that $m$ is sufficiently large so that ${\rm supp}(\gamma_m)\Subset\Omega_1$. Let $K,\varepsilon_0$ be from Proposition \ref{prop:strongsolexpterm} with these $\Omega_1, \Omega_2$ and $\gamma:=\gamma_m+c{\bf 1}_{\Omega_1}, 0<c<1$. We notice that it follows from Lemma \ref{lem:gammaconv} that the dependence of constants $K,\varepsilon_0$ on $\gamma_m$ is controlled by such dependence on $\gamma$ and is hence independent of $m$. Without loss of generality we can also assume that $f\geq 0$. Let $f_m\in C(\overline\Omega)$ be such that $f_m\geq 0$ in $L^n(\Omega)$, $f_m\to f$ in $L^n(\Omega)$ as $m\to\infty$.
Denote $\tilde f_m:=f-f_m$. We have for $0<\varepsilon<\varepsilon_0$,
\begin{align}
 \tilde\gamma_m |u_{x_i}|&=K\varepsilon\frac{\tilde\gamma_m}{\varepsilon}\frac{|u_{x_i}|}{K}\leq
K\varepsilon
\left(\exp\left(\left(\frac{|u_{x_i}|}{K}\right)^{\frac{n}{n-1}}\right)-1\right.
\nonumber\\
&+\left.C\left(\frac{|u_{x_i}|}{K}+\frac{\tilde\gamma_m}{\varepsilon}\left(\ln\left(1+\frac{\tilde\gamma_m}{\varepsilon}\right)\right)^{\frac{n-1}{n}}\right)\right)
{\bf 1}_{\Omega_1}.
\nonumber
\end{align}
Hence
\begin{align}
 \tilde\gamma_m|Du|&\leq
K\varepsilon
\sum_{i=1}^n\exp\left(\left(\frac{|u_{x_i}|}{K}\right)^{\frac{n}{n-1}}\right){\bf 1}_{\Omega_1}
\nonumber\\
&+C\left(\varepsilon |Du|{\bf 1}_{\Omega_1}+K\tilde\gamma_m\left(\ln\left(1+\frac{\tilde\gamma_m}{\varepsilon}\right)\right)^{\frac{n-1}{n}}\right).
\nonumber
\end{align}
Therefore, $u$ is an $L^n$-viscosity subsolution of
\begin{align}
{\mathcal P}^-(D^2u)-(\gamma_m(x)+C\varepsilon{\bf 1}_{\Omega_1}(x))|Du|&-K\varepsilon
\sum_{i=1}^n\exp\left(\left(\frac{|u_{x_i}|}{K}\right)^{\frac{n}{n-1}}\right){\bf 1}_{\Omega_1}(x)
\nonumber\\
&
=f(x)+CK\tilde\gamma_m(x)\left(\ln\left(1+\frac{\tilde\gamma_m(x)}{\varepsilon}\right)\right)^{\frac{n-1}{n}}.
\nonumber
\end{align}
We now require that $C\varepsilon <1$ (recall that $C$ above is the constant from \eqref{eq:dual}). We set $g_{\varepsilon,m}:=\tilde f_m(x)
+CK\tilde\gamma_m\left(\ln\left(1+\frac{\tilde\gamma_m}{\varepsilon}\right)\right)^{\frac{n-1}{n}}$. Let $w^m\in W^{2,n}_{\rm loc}(\Omega_2)\cap C(\overline \Omega_2)$ be the strong solution of
\begin{equation}\label{eq:Puccistrongexp1}
\begin{cases}
{\mathcal P}^{-}(D^2w^m)-(\gamma_m(x)+C\varepsilon{\bf 1}_{\Omega_1}(x))|Dw^m|&=g_{\varepsilon,m}(x)
\\
&+\frac{K\varepsilon}{2}
\sum_{i=1}^n\exp\left(2^{\frac{n}{n-1}}\left(\frac{|w^m_{x_i}|}{K}\right)^{\frac{n}{n-1}}\right){\bf 1}_{\Omega_1}(x)\quad \mbox{in }\Omega_2,\\
w^m=0\quad\mbox{on }\partial \Omega_2.
\end{cases}
\end{equation}
We have
\begin{equation}\label{eq:Pucciexpest11}
\|w^m\|_{C(\overline\Omega_2)}\leq C(\|g_{\varepsilon,m}\|_{L^n(\Omega_2)}+\varepsilon),
\end{equation}
where by Lemma \ref{lem:gammaconv}, $\|g_{\varepsilon,m}\|_{L^n(\Omega_2)}\to 0$ as $m\to\infty$.

By convexity, for every $a,b\in \mathbb R$
\[
\exp\left(\left(\frac{|a+b|}{K}\right)^{\frac{n}{n-1}}\right)\leq 
\frac{1}{2}\exp\left(2^{\frac{n}{n-1}}\left(\frac{|a|}{K}\right)^{\frac{n}{n-1}}\right)+\frac{1}{2}\exp\left(2^{\frac{n}{n-1}}\left(\frac{|b|}{K}\right)^{\frac{n}{n-1}}\right).
\]
It is then easy to see that $u^m:=u-w^m$ is an $L^n$-viscosity subsolution of
\[
{\mathcal P}^-(D^2u^m)-(\gamma_m(x)+C\varepsilon{\bf 1}_{\Omega_1}(x))|Du^m|-\frac{K\varepsilon}{2}
\sum_{i=1}^n\exp\left(2^{\frac{n}{n-1}}\left(\frac{|u_{x_i}^m|}{K}\right)^{\frac{n}{n-1}}\right){\bf 1}_{\Omega_1}(x)
=f_m(x)
\]
and hence an $L^n$-viscosity subsolution of
\[
{\mathcal P}^-(D^2u^m)-(\gamma_m(x)+C\varepsilon)|Du^m|-\frac{K\varepsilon}{2}
\sum_{i=1}^n\exp\left(2^{\frac{n}{n-1}}\left(\frac{|u_{x_i}^m|}{K}\right)^{\frac{n}{n-1}}\right)
=f_m(x).
\]
Since $\gamma_m,f_m$ are continuous, $u^m$ is also a $C$-viscosity solution of the above equation.
We can now basically repeat the arguments from \cite{KS-JMPA}. We extend $u^m$ to $\mathbb R^n$ so that $u^m\in BUC(\mathbb R^n)$.
For $\delta>0$ we define the sup-convolution of $u^m$ by
\[
u^m_\delta(x):=\sup\left\{ u^m(y)-\frac {|x-y|^2}{2\delta} \ : \ y\in\mathbb R^n\right\}.
\]
It is well known that $u^m_\delta$ satisfies
\begin{equation}\label{eq:lipcontumdelta}
|u^m_\delta(x)-u^m_\delta(y)|\leq \frac{2\sqrt{\|u^m\|_{\infty}}}
{\sqrt \delta} |x-y|
\leq \frac{2\sqrt{\|u\|_{\infty}+1}}
{\sqrt \delta} |x-y|\quad \mbox{for }x,y\in {\mathbb R}^n.
\end{equation}
Moreover, $u^m_\delta$ is a $C$-viscosity subsolution of
\[
{\mathcal P}^-(D^2u^m_\delta)-(\gamma_m(x)+\rho_1(\delta)+C\varepsilon)|Du^m_\delta|-\frac{K\varepsilon}{2}
\sum_{i=1}^n\exp\left(2^{\frac{n}{n-1}}\left(\frac{|(u_\delta^m)_{x_i}|}{K}\right)^{\frac{n}{n-1}}\right)
=f_m(x)+\rho_2(\delta)
\]
in $\Omega_\delta=\{x\in\Omega:{\rm dist}(x,\partial\Omega)>2\sqrt{\delta(\|u\|_{\infty}+1)}\}$ for small $\delta$, where $\rho_1(\delta),\rho_2(\delta)\geq 0$ are such that $\rho_1(\delta)+\rho_2(\delta)\to 0$ as $\delta\to 0$. Since the functions $u^m_\delta$ are semiconvex and hence pointwise twice differentiable a.e. and for a.e. $x\in\Omega_\delta$,
\begin{equation}\label{eq:umdelta}
{\mathcal P}^-(D^2u^m_\delta(x))-(\gamma_m(x)+\rho_1(\delta)+C\varepsilon)|Du^m_\delta(x)|-\frac{K\varepsilon}{2}
\sum_{i=1}^n\exp\left(2^{\frac{n}{n-1}}\left(\frac{|(u_\delta^m)_{x_i}(x)|}{K}\right)^{\frac{n}{n-1}}\right)
\leq f_m(x)+\rho_2(\delta).
\end{equation}
Without loss of generality we can assume that the sets $\Omega_\delta$ are domains. We could now proceed as in \cite{KS-JMPA} by taking the inf-convolutions, however we want to show an alternative way by mollification. 

Let $\eta_\tau$ be standard mollifiers in $\mathbb R^n$ for $\tau>0$. Denote $u^m_{\delta,\tau}:=\eta_\tau*u^m_\delta$. Multiplying \eqref{eq:umdelta} by $\eta_\tau(y-x)$, integrating both sides, using Jensen's inequality and that the operator in the left hand side of \eqref{eq:umdelta} is concave, using \eqref{eq:lipcontumdelta}, continuity of $\gamma_m,f_m$, and the fact that if $D^2u^m_\delta$ denotes the absolutely continuous part of the second order distributional derivative of $u^m_\delta$, then for every $y$ we have the matrix inequality, $D^2u^m_{\delta,\tau}(y)\geq \eta_\tau*D^2u^m_\delta(y)$, we obtain that for every sufficiently small $\tau>0$ and $y\in \Omega_{2\delta}$
\[
\begin{split}
{\mathcal P}^-(D^2u^m_{\delta,\tau}(y))-(\gamma_m(y)+2\rho_1(\delta)+C\varepsilon)|Du^m_{\delta,\tau}(y)|&\leq \frac{K\varepsilon}{2}
\sum_{i=1}^n\exp\left(2^{\frac{n}{n-1}}\left(\frac{|(u_{\delta,\tau}^m)_{x_i}(y)|}{K}\right)^{\frac{n}{n-1}}\right)
\\
&
+ f_m(y)+2\rho_2(\delta).
\end{split}
\]
Hence, by choosing $\delta=1/k, k\geq 1$ and sufficiently small $\tau=\tau(1/k)$ we have that the sequence $v^m_k:=u^m_{1/k,\tau(1/k)}\in C^2(\Omega)$ converges uniformly to $u^m$ on $\Omega$ and $v^m_k$ satisfies in $\Omega_{2/k}$
\begin{equation}\label{eq:umdelta1}
\begin{split}
{\mathcal P}^-(D^2v^m_k(y))-(\gamma_m(y)+a_k+C\varepsilon)|Dv^m_k(y)|&\leq\frac{K\varepsilon}{2}
\sum_{i=1}^n\exp\left(2^{\frac{n}{n-1}}\left(\frac{|(v_k^m)_{x_i}(y)|}{K}\right)^{\frac{n}{n-1}}\right)
\\
&
+f_m(y)+a_k,
\end{split}
\end{equation}
where $0\leq a_k\to0$ as $k\to\infty$.
We set
\[
d_{m,k}=\frac{\max_{\overline \Omega_{2/k}} v^m_\delta-\max_{\partial\Omega_{2/k}}v^m_k}{{\rm diam}(\Omega_{2/k})}
\]
and notice that, since $v^m_k$ converge uniformly to $u^m$ as $k\to \infty$ and $u^m$ converge uniformly to $u$ as $m\to\infty$ on $\Omega$, there is a constant $M>0$ such that $d_{m,k}\leq M$ for all $k,m$.
For a domain $D\subset \mathbb R^n$, 
a function $v:D\to \mathbb R$, and 
$r>0$, we denote
$$
\Gamma^+_r(v,D):=\{ x\in D \ | \ \exists p\in B_r(0)\mbox{ such that }v(y)\leq v(x)+\langle p,y-x\rangle \mbox{ for }y\in D\}.
$$
We have
\[
B_{d_{m,k}}(0)=Dv^m_\delta(\Gamma^+_{d_{m,k}}(v^m_k,\Omega_{2/k})).
\]
Moreover we notice that if 
$y\in \Gamma^+_{d_{m,k}}(v^m_k,\Omega_{2/k})$ then $|Dv^m_k(y)|<M$ and thus 
\[
\frac{K\varepsilon}{2}\sum_{i=1}^n\exp\left(2^{\frac{n}{n-1}}\left(\frac{|(v_k^m)_{x_i}(y)|}{K}\right)^{\frac{n}{n-1}}\right)\leq L\varepsilon
\]
for some absolute constant $L\geq 0$.
It now follows from
the proof of ABP maximum principle (see \cite{GT}, Section 9.1 or \cite{CCKS}, Appendix A) that 
\[
\max_{\overline \Omega_{2/k}} v^m_k\leq \max_{\partial\Omega_{2/k}} v^m_k+C{\rm diam}(\Omega)(\|f_m\|_{L^n(\Gamma^+_{d_{m,k}}(v^m_k,\Omega_{2/k}))}+(L\varepsilon+a_k)
|\Omega|^{\frac{1}{n}}),
\]
where $C=C(n,\lambda,\|\gamma_m+a_k+C\varepsilon\|_{L^n(\Omega_{2/k})})$. 

We now apply Lemma A.1(i) of \cite{CCKS} which guarantees that
$\limsup_{k\to\infty} \Gamma^+(v^m_k,\Omega_{2/k})\subset \Gamma^+(v^m,\Omega)$ to obtain
\[
\max_{\overline \Omega} v^m\leq \max_{\partial\Omega} v^m+C{\rm diam}(\Omega)(\|f_m\|_{L^n(\Gamma^+(v^m,\Omega))}+L\varepsilon|\Omega|^{\frac{1}{n}}),
\]
where $C=C(n,\lambda,\|\gamma_m+C\varepsilon\|_{L^n(\Omega)})$
We then send $m\to\infty$ and use the uniform convergence of $v_m$ to $u$ on $\Omega$, the fact that $\|\gamma_m-\gamma\|_{L^n(\Omega)}+ 
\|f_m-f\|_{L^n(\Omega)}\to 0$, and again Lemma A.1(i) of \cite{CCKS} to obtain
\[
\max_{\overline \Omega} u\leq \max_{\partial\Omega} u+C{\rm diam}(\Omega)(\|f\|_{L^n(\Gamma^+(u,\Omega))}+L\varepsilon|\Omega|^{\frac{1}{n}}),
\]
where $C=C(n,\lambda,\|\gamma\|_{L^n(\Omega)})$. It now remains to send $\varepsilon\to 0$ to get
\eqref{eq:ABPnstatementLnvisc}.

\end{proof}

\end{document}